\documentclass[preprint,12pt]{elsarticle}

\usepackage{amssymb, amsthm, amsmath,enumitem,mathabx}
\usepackage{float}
\usepackage[colorlinks=true]{hyperref}

\newtheorem{theorem}{Theorem}[section]
\newtheorem{lemma}[theorem]{Lemma}
\newtheorem{corollary}[theorem]{Corollary}
\newtheorem{proposition}[theorem]{Proposition}
\newtheorem{remark}[theorem]{Remark}
\newtheorem{definition}[theorem]{Definition}

\newcommand{\R}{\mathbb R}

\newcommand{\eps}{\varepsilon}

\newcommand{\dif}{\ensuremath{\,\mathrm{d}}}

\newcommand{\vol}{\mathop{\mathrm{vol}}\nolimits}

\renewcommand{\div}{\mathop{\mathrm{div}}\nolimits}
\newcommand{\curl}{\mathop{\mathrm{curl}}\nolimits}

\numberwithin{equation}{section}

\begin{document}
	
	\begin{frontmatter}
		
		
		
		\title{A small rigid body immersed in a three-dimensional viscous fluid and the vanishing viscosity limit}

		\author[label2]{Xiaoguang You\corref{cor1}}
			\cortext[cor1]{Corresponding author. Email:xgyou@jxstnu.edu.cn}
		
		\affiliation[label2]{organization={School of Mathematical Sciences, Jiangxi Science $\&$ Technology Normal University},
			city={Nanchang},
			state={Jiangxi},
			country={China}}
		
		\begin{abstract}
In this paper, we investigate the asymptotic behavior of a coupled fluid--rigid body system, consisting of a rigid body immersed in a three-dimensional viscous incompressible fluid, as the body size and the
fluid viscosity vanish. We prove that the limiting velocity field is governed by the whole-space incompressible Euler equations. Moreover, the influence of the fluid on the rigid body motion is shown to vanish in this
limit.
		\end{abstract}

		\begin{keyword}
			Fluid-rigid interaction \sep Navier--Stokes equations \sep Euler equations \sep Asymptotic behavior
			
			
			\MSC 35B40 \sep 35Q30 \sep 35Q31
			
		\end{keyword}
		
	\end{frontmatter}
	
\section{Introduction}\label{sec1}

In this work, we study the interaction between a viscous incompressible fluid and a small rigid body completely immersed in the fluid. The fluid motion is governed by the three-dimensional Navier--Stokes equations, while
the rigid body motion is described by Newton's laws of motion, namely, the balance of linear and angular momentum. Our goal is to investigate the asymptotic behavior of the coupled fluid--rigid body system as the size of the rigid body and the viscosity coefficient tend to zero.

We now introduce the mathematical formulation of the problem. Let
$\Omega$ be a smooth, bounded, simply connected domain in $\mathbb R^3$.
For $\eps>0$, we define the scaled rigid body
\[
\Omega_\eps=\eps\Omega,
\]
and denote by
\[
\mathcal F_\eps=\mathbb R^3\setminus
\overline{\Omega_\eps}
\]
the corresponding fluid domain. For $t\ge0$, let
\[
\Omega_{\nu,\eps}(t)\subset\mathbb R^3
\]
	denote the region occupied by the rigid body, with \(\Omega_{\nu,\eps}(0)=\Omega_\eps\).  The fluid occupies the exterior domain
\[
\mathcal F_{\nu,\eps}(t)
=
\mathbb R^3\setminus
\overline{\Omega_{\nu,\eps}(t)} .
\]
We denote by $n^{\nu,\eps}$ the unit
normal vector on $\partial\Omega_{\nu,\eps}(t)$ pointing toward the
interior of the rigid body. The fluid density is normalized to one, and the viscosity coefficient is
denoted by $\nu>0$. The fluid velocity and pressure are denoted by $u^{\nu,\eps}$ and $p^{\nu,\eps}$, respectively. With the above notation, the coupled fluid--rigid body system for
$t>0$ is given by
\begin{itemize}[leftmargin=1em] 
	\item \textbf{Navier--Stokes equations:}
\begin{align}
	\partial_t u^{\nu,\eps} - \nu\Delta u^{\nu,\eps} + (u^{\nu,\eps} \cdot \nabla) u^{\nu,\eps} + \nabla p^{\nu,\eps} = 0, \quad  x \in \mathcal{F}_{\nu,\eps}(t),\label{Equ:ns-1}\\
	\div u^{\nu,\eps} = 0, \quad  x \in \mathcal{F}_{\nu,\eps}(t);
\end{align}
 \item \textbf{Newton's laws of motion:}
\begin{align}
	m^{\nu,\eps}\ddot{h}^{\nu,\eps}(t) = -\int_{\partial\Omega_{\nu,\eps}(t)}\sigma(u^{\nu,\eps}, p^{\nu,\eps}) n^{\nu,\eps} \dif x, \\
	\frac{\dif}{\dif t}
	\big(J^{\nu,\eps}(t)\omega^{\nu,\eps}(t)\big) = -\int_{\partial \Omega_{\nu,\eps}(t)} \big(x - h^{\nu,\eps}(t)\big) \times \big(\sigma(u^{\nu,\eps}, p^{\nu,\eps}) n^{\nu,\eps}\big) \dif x;
\end{align}

\item \textbf{Boundary conditions:}
\begin{align}
	u^{\nu,\eps}(x, t) =  \dot{h}^{\nu,\eps}(t) + \omega^{\nu,\eps}(t) \times \big(x - h^{\nu,\eps}(t)\big), \quad  x \in \partial \Omega_{\nu,\eps}(t),\\
	\lim_{|x|\rightarrow \infty } u^{\nu,\eps}(x, t) = 0;
\end{align}
\item \textbf{Initial conditions:}
\begin{equation}\label{Equ:ns-7}
	\begin{split}
		u^{\nu,\eps}(x, 0) = u_0^{\nu,\eps}(x), \quad x \in \mathcal{F}_\eps, \\
		h^{\nu,\eps}(0) = 0, \quad \dot{h}^{\nu,\eps}(0) = \ell_0 ,  \quad \omega^{\nu,\eps}(0) = \omega_0.
	\end{split}
\end{equation}

\end{itemize}
Here $h^{\nu,\eps}(t)$ denotes the trajectory of the center of mass of the rigid body, and $\omega^{\nu,\eps}(t)$ denotes its angular velocity. The quantities $m^{\nu,\eps}$ and $J^{\nu,\eps}$ denote the mass and the inertia matrix of the rigid body, respectively. They are defined by
\begin{align*}
	m^{\nu,\eps} =  \rho_s^{\nu,\eps} \vol(\Omega_\eps), \\
	(J^{\nu,\eps})_{ij} = \rho_s^{\nu,\eps}  \int_{\Omega_{\nu,\eps}(t)}\big[|x-h^{\nu,\eps}(t)|^2\delta_{ij} - \big(x_i-h^{\nu,\eps}(t)\big)\big(x_j-h^{\nu,\eps}(t)\big)\big] \dif x,
\end{align*}
where $i,j\in\{1,2,3\}$, $\rho_s^{\nu,\eps}$ is the density of the rigid body, $\vol(\Omega_\eps)$ its volume, and $\delta_{ij}$ the Kronecker delta. For simplicity, we assume that the density $\rho_s^{\nu,\eps}$ is
spatially constant in the rigid body $\Omega_{\nu,\eps}(t)$. The Cauchy stress tensor of the fluid is given by
\begin{align*}
	\sigma(u^{\nu,\eps}, p^{\nu,\eps}) = -p^{\nu,\eps}\mathbb{I}_3 + 2\nu D(u^{\nu,\eps}), 
\end{align*}
where $\mathbb{I}_3$ is the identity matrix in $\mathbb R^3$ and $D(u^{\nu,\eps})$ is the deformation tensor
\begin{equation*}\label{Equ:ns-5}
	D(u^{\nu,\eps}) = \frac{1}{2}\big(\nabla u^{\nu,\eps} + (\nabla u^{\nu,\eps})^\top\big). 
\end{equation*}

In this paper, the analysis is carried out in the framework of Leray--Hopf weak solutions
to the coupled fluid--rigid body system. To rigorously define this class of solutions, we first extend the velocity field $u^{\nu,\eps}$ to the whole space $\mathbb R^3$ (still denoted by $u^{\nu,\eps}$) by setting
\begin{equation}\label{Def:general-u}
	{u}^{\nu,\eps}(x, t) = \dot{h}^{\nu,\eps}(t) + \omega^{\nu,\eps}(t)\times(x - h^{\nu,\eps}(t)) \text{ for } x \in \Omega_{\nu,\eps}(t).
\end{equation}
Similarly, we extend the initial velocity field globally by defining
\begin{equation*}
	u_0^{\nu,\eps}(x) =\ell_0 + \omega_0\times x,   \text{ for }x \in {\Omega_{\eps}}.
\end{equation*}
The global density of the fluid--rigid system in $\mathbb{R}^3$ is defined as
\begin{equation*}
	{\rho}^{\nu,\eps}(x, t) = 1 \hbox{ for } x \in \mathcal{F}_{\nu,\eps}(t) \hbox{ and } {\rho}^{\nu,\eps} = \rho_s^{\nu,\eps} \hbox{ for } x \in {\Omega_{\nu,\eps}(t)},
\end{equation*}
and the initial global density is denoted by
\begin{equation}
	\rho^{\nu, \eps}_0(x) = 1 \text{ for } x \in \mathcal{F}_{\eps} \text{ and } \rho^{\nu, \eps}_0(x) = \rho_s^{\nu,\eps} \text{ for } x \in \Omega_\eps.
\end{equation}

We now define weak solutions to the system \eqref{Equ:ns-1}-\eqref{Equ:ns-7}.
\begin{definition}[See \cite{conca2000existence,desjardins2000weak,gunzburger2000global}] \label{Def:weak-solution} Suppose that the initial velocity $u_0^{\nu,\eps} \in L^2(\R^3)$ satisfies
	\begin{equation}\label{Initial-condition}
		\begin{cases}
			 \div u_0^{\nu,\eps} = 0 &\hbox{ in } \mathcal{F}_\eps, \\
			u_0^{\nu,\eps}(x) \cdot n^{\eps}(x) = (\ell_0 + \omega_0 \times x) \cdot n^{\eps}(x) &\hbox{ on } \partial\Omega_\eps,
		\end{cases}
	\end{equation}
	where $n^\eps$ is the unit normal vector field on $\partial\Omega_\eps$. A triplet $({u}^{\nu,\eps}, h^{\nu,\eps}, \omega^{\nu,\eps})$ is called a weak solution of the system \eqref{Equ:ns-1}-\eqref{Equ:ns-7}, if for every $T>0$,
		\begin{equation}
			\begin{split}
				{u}^{\nu,\eps} \in L^\infty(0, T; L^2(\mathbb{R}^3)) \text{ and } D(u^{\nu,\eps}) \in L^2(0, T;L^2(\mathbb{R}^3\times \R^3)),\\
				{u}^{\nu,\eps}(x, t) = \dot{h}^{\nu,\eps}(t) + \omega^{\nu,\eps}(t) \times \big(x - h^{\nu,\eps}(t)\big)   \text{ for } x \in \Omega_{\nu,\eps}(t), t\in(0, T),
			\end{split}
		\end{equation}
	and for every divergence-free vector field $\varphi^{\nu,\eps}\in C^1([0,T];H^1(\mathbb R^3))$
	such that $D(\varphi^{\nu,\eps}) = 0$ in $\Omega_{\nu,\eps}(t)$, the following identity
	
\begin{equation}\label{Equ:weak}
	\begin{aligned}
		&\left(\int_{\mathbb R^3}
		\rho^{\nu,\eps}u^{\nu,\eps}\cdot
		\varphi^{\nu,\eps}\,\dif x\right)(t) +2\nu
		\int_0^t\int_{\mathbb R^3}
		D(u^{\nu,\eps}):D(\varphi^{\nu,\eps})
		\dif x\dif s\\
		&=\int_{\mathbb R^3}
		\rho_0^{\nu,\eps}u_0^{\nu,\eps}\cdot
		\varphi^{\nu,\eps}(0)\dif x
	+\int_0^t\int_{\mathbb R^3}
		\rho^{\nu,\eps}u^{\nu,\eps}
		\cdot
		\big(
		\partial_s\varphi^{\nu,\eps}
		+
		(u^{\nu,\eps}\cdot\nabla)
		\varphi^{\nu,\eps}
		\big)
		\dif x\dif s
	\end{aligned}
\end{equation}
holds for almost all $t \in (0, T)$.  For matrices
$A,B\in\mathbb R^{3\times3}$, we use the notation
\[
A:B=\sum_{i,j=1}^{3}A_{ij}B_{ij}.
\]
\end{definition}
The existence of weak solutions to the system has been established in \cite{serre1987chute,conca2000existence,desjardins1999existence,gunzburger2000global}. Moreover, the triplet $(u^{\nu,\eps},h^{\nu,\eps},\omega^{\nu,\eps})$ is known to satisfy the following energy inequality:
\begin{equation}\label{Identity:energy}
	\begin{split}
		&\frac{1}{2}\|u^{\nu,\eps}\|_{L^2(\mathcal{F}_{\nu,\eps}(t))}^2 + E_{\mathrm{body}}^{\nu,\eps}(t)
		+2\nu\int_0^t \|D(u^{\nu,\eps})\|_{L^2(\mathcal{F}_{\nu,\eps}(s))}^2\,\dif s\\
		&\leqslant \frac{1}{2}\|u_0^{\nu,\eps}\|_{L^2(\mathcal{F}_\eps)}^2 + E_{\mathrm{body}}^{\nu,\eps}(0),
	\end{split}
\end{equation}
where $E_{\mathrm{body}}^{\nu,\eps}$ denotes the kinetic energy of the rigid body, given by
\begin{equation}
	E_{\mathrm{body}}^{\nu,\eps}(t)=\frac12\Bigl(m^{\nu,\eps}|\dot h^{\nu,\eps}|^2+(J^{\nu,\eps}\omega^{\nu,\eps})\cdot\omega^{\nu,\eps}\Bigr),
\end{equation}
and
\begin{equation}
	E_{\mathrm{body}}^{\nu,\eps}(0)=\frac12\Bigl(m^{\nu,\eps}|\ell_0|^2+(J^{\nu,\eps}_0\omega_0)\cdot\omega_0\Bigr).
\end{equation}
The matrix $J_0^{\nu,\eps}$ is defined by
\begin{equation*}
	(J^{\nu,\eps}_0)_{ij}=\rho_s^{\nu,\eps}\int_{\Omega_\eps}\bigl(\delta_{ij}|x|^2-x_ix_j\bigr)\,\dif x.
\end{equation*}
We now review some related results concerning the asymptotic behavior of
solutions in the shrinking-body limit. For a fixed rigid body immersed in the fluid, Iftimie et al. \cite{Iftimie2006two} proved for the two-dimensional Navier--Stokes equations that, as the size of the body tends to zero, the limit flow satisfies the full-plane Navier--Stokes equations. Later, similar conclusions for the 3D case were established by Iftimie and Kelliher \cite{Iftimie2009Remarks}. In \cite{Lacave2009two,Lacave2015three}, Lacave considered the case of a thin obstacle shrinking to a curve in two
and three dimensions, and it was proved that the limit flow satisfies the Navier--Stokes equations in the exterior of a curve.  For the moving rigid body case, where the body motion is governed by the
conservation laws of linear and angular momentum, the asymptotic behaviour of the fluid--rigid system becomes rather complicated. Lacave and Takahashi in \cite{lacave2017small} studied the asymptotic behaviour of a small moving disk immersed in a two-dimensional viscous incompressible fluid with small initial data, and obtained the convergence as the size tends to zero. Later, Feireisl et al. \cite{feireisl2023motion} extended the result by removing the restriction on the initial data and the shape of the rigid body for both two and three dimensions. More recently, He and Su \cite{he2025vanishing} further considered the case where the small rigid body shrinks to a ``massless'' point. They proved that, as the size of the rigid body tends to zero while its density remains unchanged, the \textit{Leray--Hopf} weak solution converges weakly in $L^2$ to a solution of the Navier--Stokes equations in the whole space.

Motivated by the above results, we investigate the asymptotic behavior of solutions of the system \eqref{Equ:ns-1}-\eqref{Equ:ns-7} as both the viscosity coefficient $\nu$ and the size parameter $\eps$ tend to zero. More precisely, let $\widebar u_0\in H^3(\mathbb R^3)$ be a divergence-free
vector field. It is known, see for example
\cite{ebin1970groups,kato1972non,majda2002vorticity}, that there exist a
maximal time $T^*>0$ and a unique solution
\begin{equation}
\widebar u\in C^1([0,T];H^2(\mathbb R^3))
\cap C([0,T];H^3(\mathbb R^3)),
\qquad 0<T<T^*,
\end{equation}
to the following Euler equations:
\begin{equation}
	\begin{split}
		\partial_t \widebar u + \widebar u \cdot \nabla \widebar u + \nabla \widebar p = 0,\\
		\div \widebar u = 0,\\
		\widebar u|_{t=0} = \widebar u_0.
	\end{split}
\end{equation}
The main question is whether $u^{\nu,\eps}$ converges to $\widebar u$, assuming $u^{\nu,\eps}_0\to\bar u_0$ as $\nu,\eps\to0$. Notably, for a fixed rigid body immersed in the fluid, Iftimie et al. \cite{iftimie2009incompressible} proved that, under the assumption  
\begin{equation}0 < \eps \leqslant k\nu
\end{equation}
for some sufficiently small constant $k$, the solution $u^{\nu,\eps}$ converges strongly to $\widebar u$ in $L^\infty(0, T; L^2(\mathbb{R}^3))$ as $\nu,\eps \rightarrow 0$. 

To extend the convergence result of \cite{iftimie2009incompressible} to our case, as usual in fluid--rigid interaction problems, a major difficulty is that the equations for the fluid hold in a time dependent domain, hence we have a free boundary value problem. A further difficulty is that the incompatibility between \(u^{\nu,\eps}\) and \(\widebar u\) at the boundary leads to the formation of a boundary layer. To overcome these difficulties, on the one hand, we construct  a suitable stream function to obtain a family of approximations $\{\widebar{u}^{\nu,\eps}\}$ for $\widebar u$, which vanish at the moving boundary. On the other hand, we apply the transformation method proposed in \cite{cumsille2008well} to construct  a boundary corrector term $\Lambda^{\nu,\eps}$, which coincides with the rigid body motion in a small neighborhood of the
boundary and reduces to the identity away from the boundary. Finally, by using the energy identity \eqref{Identity:energy}, we deduce an estimate for the error term $u^{\nu,\eps} - \widebar u$ and   establish the following main result.

\begin{theorem}\label{Thm:main}
		Let $0<T<T^*$ be fixed. Let $1<{\alpha} \leq {\beta} <3$. Suppose that  
	\begin{equation}
		\rho_s\eps^{-\alpha} \leq \rho_s^{\nu,\eps} \leq \rho_s\eps^{-\beta} \text{ for } \ 0 <\nu, \eps<1,
	\end{equation}
	where $\rho_s > 0$ is a constant. Assume that the extended initial velocity
	$u_0^{\nu,\eps}\in L^2(\mathbb R^3)$ satisfies \eqref{Initial-condition} and converges to $\widebar u_0$ in the way
	\begin{equation}\label{Assumption-initial}
		\| u_0^{\nu,\eps} - \widebar u_0\|_{L^2(\mathbb{R}^3)} \rightarrow 0 \hbox { as } \nu, \eps \rightarrow 0.
	\end{equation}
	Then there exists a constant $k=k(\Omega, \widebar u_0, T)>0$ such that, if $0 < \eps \leqslant k\nu$, then for all $t \in [0, T]$ we have
	\begin{equation}\label{Est:diff}
		\begin{split} 
		\| u^{\nu,\eps}(\cdot,t)& - \widebar u(\cdot,t)\|_{L^2} + \sqrt{E_{\mathrm{body}}^{\nu,\eps}(t)} \\
		&\leq K\big(\|u_0^{\nu,\eps} - \widebar u_0\|_{L^2(\mathbb{R}^3)} + \sqrt{\nu} + \eps^{{\min(3-\beta, \alpha-1)}/{2}}\big),
		\end{split}
	\end{equation}
where $K=K(\Omega,\widebar u_0,T,\rho_s)>0$ is independent of $\nu$ and $\eps$.
\end{theorem}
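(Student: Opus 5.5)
We follow a Kato-type energy argument adapted to the moving body, as announced in the introduction. First, the Euler solution $\widebar u$ is modified near the moving body so that it is an admissible test function in Definition~\ref{Def:weak-solution}. Let $\eta$ be a smooth cutoff with $\eta=0$ on $B(0,R\eps)\supset\overline{\Omega_\eps}$ and $\eta=1$ outside $B(0,2R\eps)$. Write $\widebar u=\curl\psi$ with $\psi$ the (vector) stream function. Then set
\[
\widebar u^{\nu,\eps}(x,t)=\curl\Big(\eta\big(x-h^{\nu,\eps}(t)\big)\,\big[\psi(x,t)-\psi(h^{\nu,\eps}(t),t)-\nabla\psi(h^{\nu,\eps}(t),t)(x-h^{\nu,\eps}(t))\big]\Big),
\]
where the subtraction of the first-order Taylor polynomial makes the bracket $O(|x-h|^2)$ near the body. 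This field is divergence free and vanishes near $\Omega_{\nu,\eps}(t)$. Since the cutoff region has volume $O(\eps^3)$ and $\nabla^k\eta=O(\eps^{-k})$, we get
\[
\|\widebar u^{\nu,\eps}-\widebar u\|_{L^2}\lesssim \eps^{3/2},\qquad \|\nabla(\widebar u^{\nu,\eps}-\widebar u)\|_{L^2}\lesssim\eps^{1/2},
\]
with constants depending on $\|\widebar u\|_{H^3}$. Because $\widebar u^{\nu,\eps}$ depends on $h^{\nu,\eps}(t)$, its time derivative contains a term $\dot h^{\nu,\eps}\cdot\nabla$. This term must be controlled through the body energy: $|\dot h|\lesssim (m^{\nu,\eps})^{-1/2}\sqrt{E_{\rm body}}\lesssim \eps^{(\alpha-3)/2}\sqrt{E_{\rm body}}$, multiplied against a quantity localized on a ball of volume $\eps^3$.

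Second, we add a boundary corrector following \cite{cumsille2008well}. Using the rigid change of variables attached to $(h^{\nu,\eps},\omega^{\nu,\eps})$, we build $\Lambda^{\nu,\eps}$, a divergence-free field equal to the rigid velocity $\dot h+\omega\times(x-h)$ inside and near the body. Since $\widebar u^{\nu,\eps}$ already vanishes there, we use $\varphi=\widebar u^{\nu,\eps}$ itself, possibly combined with the rigid part inside. Then $\varphi$ satisfies $D\varphi=0$ in $\Omega_{\nu,\eps}(t)$ and is admissible. Taking $\varphi$ in \eqref{Equ:weak}, using the Euler equation satisfied by $\widebar u$ (tested against $u^{\nu,\eps}$), and adding the energy inequality \eqref{Identity:energy}, we obtain the standard relative energy inequality for
\[
Y(t)=\tfrac12\|u^{\nu,\eps}-\widebar u^{\nu,\eps}\|^2_{L^2(\mathcal F_{\nu,\eps}(t))}+E^{\nu,\eps}_{\rm body}(t).
\]
Inside the body $\widebar u^{\nu,\eps}=0$, so the body contribution is exactly $E_{\rm body}$, up to the error $\rho_s^{\nu,\eps}\int_{\Omega}|\widebar u|^2\lesssim\rho_s^{\nu,\eps}\eps^3\le\eps^{3-\beta}$ from replacing $\widebar u$ by $\widebar u^{\nu,\eps}$ in the initial and final $L^2$ norms. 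This is exactly the source of the $\eps^{(3-\beta)/2}$ term.

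Third, we estimate the remainder terms. The convective term $\int (w\cdot\nabla)\widebar u^{\nu,\eps}\cdot w$ with $w=u-\widebar u^{\nu,\eps}$ splits as follows. Away from the cutoff region it is bounded by $\|\nabla\widebar u\|_\infty Y$. On the cutoff annulus we use Hardy's inequality $\|w/|x-h|\|_{L^2}\lesssim\|\nabla w\|_{L^2}$ together with $|\nabla\widebar u^{\nu,\eps}|\lesssim C$ and $|\widebar u^{\nu,\eps}|\lesssim|x-h|$ there, which is the benefit of the Taylor subtraction. This gives a bound $C\eps\|\nabla w\|_{L^2}^2$, absorbed by the dissipation $\nu\|\nabla w\|^2$ precisely when $\eps\le k\nu$. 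This is the Kato-type mechanism of \cite{iftimie2009incompressible}. The viscous term $\nu\int D\widebar u^{\nu,\eps}:Dw$ gives $\nu/4\|Dw\|^2+C\nu$, whence the $\sqrt\nu$. The $\dot h$-term gives $\eps^{(\alpha-3)/2}\sqrt{E_{\rm body}}\cdot\eps^{3/2}\cdot\|w\|_{L^2(B_{2R\eps})}$. Using $\|w\|_{L^2(B_{2R\eps})}\lesssim \eps\|\nabla w\|$ (Hardy again) or the bound $\|w\|\le\sqrt Y$, this is bounded by $Y+C\eps^{\alpha-1}$, producing the $\eps^{(\alpha-1)/2}$ term.

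Korn's inequality in the whole space, valid since $u$ is rigid inside the body, converts $\|Dw\|$ into $\|\nabla w\|$. Gronwall then yields
\[
Y(t)\le C\big(\|u_0-\widebar u_0\|^2+\nu+\eps^{3-\beta}+\eps^{\alpha-1}\big),
\]
and the triangle inequality with $\|\widebar u^{\nu,\eps}-\widebar u\|\lesssim\eps^{3/2}$ gives \eqref{Est:diff}. The main obstacle is the time-derivative term arising from the moving cutoff. It couples $\dot h$, which is only controlled by the degenerate weight $m^{\nu,\eps}$, to the fluid error, and balancing it is what forces the lower density bound $\alpha>1$. I would first try to handle it with the localized Hardy estimate described above, tracking the powers of $\eps$ carefully.
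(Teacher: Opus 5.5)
\textbf{The construction of $\widebar u^{\nu,\eps}$ fails.} The linear Taylor term has constant curl. With $M=\nabla\psi(h^{\nu,\eps}(t),t)$ one has $\curl\big(M(x-h^{\nu,\eps})\big)=(\curl\psi)(h^{\nu,\eps}(t),t)=\widebar u(h^{\nu,\eps}(t),t)$. So wherever $\eta=1$, your field equals $\widebar u-\widebar u(h^{\nu,\eps}(t),t)$. This has four consequences:
\begin{itemize}
\item it is not in $L^2(\R^3)$, hence not an admissible test function in \eqref{Equ:weak};
\item $Y(t)=+\infty$;
\item the bound $\|\widebar u^{\nu,\eps}-\widebar u\|_{L^2}\lesssim\eps^{3/2}$ is false;
\item the $\dot h$-part of $\partial_t\widebar u^{\nu,\eps}$ contains the non-localized term $-\nabla\widebar u(h^{\nu,\eps},t)\,\dot h^{\nu,\eps}$.
\end{itemize}
No other construction can deliver the annulus bounds you want either. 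A field vanishing on $|x-h^{\nu,\eps}|=R\eps$ and equal to $\widebar u$ on $|x-h^{\nu,\eps}|=2R\eps$ must change by about $|\widebar u(h^{\nu,\eps},t)|$ across a shell of width $R\eps$. Hence $|\nabla\widebar u^{\nu,\eps}|\gtrsim\eps^{-1}$ somewhere in the shell, and $|\widebar u^{\nu,\eps}|\lesssim|x-h^{\nu,\eps}|$ is impossible unless $\widebar u(h^{\nu,\eps},t)=0$. So the ``benefit of the Taylor subtraction'' does not exist: neither the $O(1)$ gradient in the convective term nor the $\eps^{3/2}$ localization of the $\dot h$-term is available.

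\textbf{Repair, and how it compares with the paper.} Drop the linear term and take $\curl\big(\eta(x-h^{\nu,\eps})\,[\psi-\psi(h^{\nu,\eps},t)]\big)$, which is exactly the paper's $\widebar u^{\nu,\eps}$. Then on the annulus $|\nabla\widebar u^{\nu,\eps}|\lesssim\eps^{-1}$, and the $\dot h$-forcing has $L^2$ norm $\lesssim|\dot h^{\nu,\eps}|\eps^{1/2}+\eps^{3/2}\lesssim\eps^{\alpha/2-1}$. Everything must now be carried by your Hardy step. That step is correct if applied on all of $\R^3$, and it is where you genuinely differ from the paper. Since $w=u^{\nu,\eps}-\widebar u^{\nu,\eps}\in H^1(\R^3)$ and $D(w)=0$ in the body, whole-space Hardy and Korn give
\[
\|w\|_{L^2(B(h^{\nu,\eps}(t),(R+2)\eps))}\le K\eps\,\|D(w)\|_{L^2(\mathcal F_{\nu,\eps}(t))}.
\]
With this, the near-body convective term is $\lesssim\eps\|D(w)\|^2$, which is absorbed when $\eps\le k\nu$. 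The $\dot h$-term is $\lesssim\eps^{\alpha/2}\|D(w)\|\le\eps\|D(w)\|^2+K\eps^{\alpha-1}$. Your fallback $\|w\|\le\sqrt{Y}$ would instead give $\eps^{(\alpha-2)/2}\,Y$, which does not close for $\alpha<2$.

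The paper gets the same local control differently. It subtracts the corrector $\Lambda^{\nu,\eps}$ to obtain an $H^1_0$ field and applies a Poincar\'e inequality on $\mathcal A_{\nu,\eps}(t)$ (Proposition \ref{Pro:w}). This costs an additive $\eps^{\alpha/2}$, and hence an extra $\eps^{\alpha-1}$ in $I_2$. Your extension-plus-Hardy route needs no corrector at all; your second step is vestigial anyway, since you end up testing with $\widebar u^{\nu,\eps}$ alone, as the paper does.

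Finally, a smaller point: the $\eps^{3-\beta}$ term comes from $E_{\mathrm{body}}^{\nu,\eps}(0)\le\frac12\rho_s^{\nu,\eps}\int_{\Omega_\eps}|\ell_0+\omega_0\times x|^2\dif x$, not from replacing $\widebar u$ by $\widebar u^{\nu,\eps}$.
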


When $\nu$ and $\eps$ tend to zero, \eqref{Est:diff} implies that $u^{\nu,\eps}$ converges strongly to $\widebar u$ in the $L^2$-norm. Moreover, when the initial linear velocity $\ell_0$ vanishes, the critical finite-mass scaling case $\rho_s^{\nu,\eps}=\rho_s\eps^{-3}$ can also be treated.
\begin{theorem} \label{Thm:endpoint} Suppose that $\ell_0 = 0$ and $\rho_s^{\nu,\eps} = \rho_s\eps^{-3}$ for some constant $\rho_s >0$.  Then there exist constants $k=k(\Omega, \widebar u_0, T) > 0$ and $K=K(\Omega, \widebar u_0, T, \rho_s)$ such that
	\begin{equation}\label{Est:tmp-11111}
		\begin{split}
			&\| u^{\nu,\eps}(\cdot,t) - \widebar u(\cdot,t)\|_{L^2} + \sqrt{E_{\mathrm{body}}^{\nu,\eps}(t)} \leq K(\| u_0^{\nu,\eps} - \widebar u_0\|_{L^2(\mathbb{R}^3)} + \sqrt{\nu}),
		\end{split}
	\end{equation}
	for all $0 < \eps < k\nu$ and $0 \leq t \leq T$. 
\end{theorem}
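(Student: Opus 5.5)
The strategy is to repeat the relative energy argument behind Theorem \ref{Thm:main} and to identify the two places where the exponent $\min(3-\beta,\alpha-1)$ enters. With $\beta=\alpha=3$ the term $\eps^{(3-\beta)/2}$ degenerates to a constant, and both places must be handled differently.

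We test the weak formulation \eqref{Equ:weak} with the admissible field $\varphi^{\nu,\eps}=\Lambda^{\nu,\eps}\widebar u^{\nu,\eps}$. Here $\widebar u^{\nu,\eps}$ is the stream-function approximation of $\widebar u$, and $\Lambda^{\nu,\eps}$ is the boundary corrector, which equals a rigid motion near $\partial\Omega_{\nu,\eps}(t)$. We combine this with the energy inequality \eqref{Identity:energy} and the energy identity for $\widebar u$. This gives an inequality for
\[
\mathcal E(t)=\tfrac12\int_{\R^3}\rho^{\nu,\eps}|u^{\nu,\eps}-\varphi^{\nu,\eps}|^2\dif x .
\]
Every error term is then treated exactly as in the proof of Theorem \ref{Thm:main}:
\begin{itemize}[leftmargin=1em]
\item the viscous terms, using $\eps\le k\nu$ and a Hardy-type inequality near the small body to absorb the boundary-layer contributions;
\item the convective term, by Gronwall with $\|\nabla\widebar u\|_{L^\infty}$;
\item the cutoff terms, which are supported in a ball of radius $O(\eps)$ and contribute $O(\eps^{3/2})$ in $L^2$.
\end{itemize}
None of these steps uses the density scaling.

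The density enters only through the rigid-body part of $\mathcal E$. This part contains $m^{\nu,\eps}|\dot h^{\nu,\eps}-\varphi^{\nu,\eps}(h^{\nu,\eps})|^2$ and the analogous rotational term, together with the initial body energy $E^{\nu,\eps}_{\mathrm{body}}(0)$.

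\emph{Initial body energy.} With $\rho_s^{\nu,\eps}=\rho_s\eps^{-3}$ we have $m^{\nu,\eps}=\rho_s\vol(\Omega)$, which is of order one, and $J^{\nu,\eps}_0=O(\eps^2)$. Because $\ell_0=0$,
\[
E^{\nu,\eps}_{\mathrm{body}}(0)=\tfrac12 J_0^{\nu,\eps}\omega_0\cdot\omega_0=O(\eps^2).
\]
This is the point where $\ell_0=0$ is needed: otherwise the term $\tfrac12 m|\ell_0|^2$ would be an $O(1)$ error. Under $\eps\le k\nu$ the bound $O(\eps^2)$ is at most $O(\nu)$, so it is absorbed into the $\sqrt\nu$ term.

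\emph{Body-weighted cross terms.} These are the terms in which $\rho^{\nu,\eps}$ multiplies $\varphi^{\nu,\eps}$ or its time derivative on $\Omega_{\nu,\eps}(t)$. Since $\varphi^{\nu,\eps}$ is bounded in $W^{1,\infty}$ uniformly, such terms cost $\rho_s^{\nu,\eps}|\Omega_\eps|\sim m$, which is $O(1)$ and not small. Two devices are planned for this obstacle.

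First, on the body we choose the corrector so that $\varphi^{\nu,\eps}$ vanishes there. Concretely, near the body $\Lambda^{\nu,\eps}\widebar u^{\nu,\eps}$ is taken to be the zero rigid motion; this matches the stream-function construction, since $\widebar u^{\nu,\eps}$ already vanishes at the moving boundary. All body-weighted cross terms then disappear. What remains is the identity
\[
\mathcal E(t)=\tfrac12\|u^{\nu,\eps}-\varphi^{\nu,\eps}\|^2_{L^2(\mathcal F_{\nu,\eps}(t))}+E^{\nu,\eps}_{\mathrm{body}}(t),
\]
and the fluid part is estimated exactly as before.

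Second, it remains to check that the admissibility constraint $D(\varphi^{\nu,\eps})=0$ and the cutoff errors do not reintroduce $\rho_s^{\nu,\eps}$. These errors live in the fluid, where $\rho^{\nu,\eps}=1$, so they are controlled by $\eps$-powers and by $\sqrt\nu$ as in the proof of Theorem \ref{Thm:main}.

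\emph{Conclusion.} Gronwall gives
\[
\mathcal E(t)\le C\bigl(\|u_0^{\nu,\eps}-\widebar u_0\|_{L^2}^2+\nu+\eps^2\bigr).
\]
Finally,
\[
\|u^{\nu,\eps}-\widebar u\|_{L^2(\R^3)}\le\|u^{\nu,\eps}-\varphi^{\nu,\eps}\|_{L^2(\mathcal F_{\nu,\eps}(t))}+\|u^{\nu,\eps}\|_{L^2(\Omega_{\nu,\eps}(t))}+\|\varphi^{\nu,\eps}-\widebar u\|_{L^2}.
\]
The middle term is bounded by $(\eps^3/m)^{1/2}\sqrt{E_{\mathrm{body}}^{\nu,\eps}(t)}$, which is at most a constant times $\sqrt{\mathcal E(t)}$. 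The last term is $O(\eps^{3/2})$ because the correction is supported in an $O(\eps)$-ball. Together these give \eqref{Est:tmp-11111}.

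The main obstacle is constructing the test field so that it vanishes identically on the moving body while keeping the fluid-side error bounds uniform. Any nonzero value of $\varphi^{\nu,\eps}$ on the body costs $O(1)$ because the mass is finite.
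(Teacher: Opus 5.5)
Once your test field is taken to vanish near the body (it is then just $\widebar u^{\nu,\eps}$, so the ill-defined product $\Lambda^{\nu,\eps}\widebar u^{\nu,\eps}$ and the body-weighted cross terms never arise), this is the paper's proof: rerun the estimates of Theorem~\ref{Thm:main} with $\alpha=\beta=3$, and use $\ell_0=0$ to get $E_{\mathrm{body}}^{\nu,\eps}(0)=\tfrac12 J_0^{\nu,\eps}\omega_0\cdot\omega_0=O(\eps^2)$, which $\eps\le k\nu$ absorbs into $\nu$. Two of your side claims are wrong, though harmlessly here: the near-body fluid terms do depend on the density, because the trace of $W^{\nu,\eps}$ (handled via $\Lambda^{\nu,\eps}$ in Proposition~\ref{Pro:w}) and $\partial_t\varphi^{\nu,\eps}\sim\eps^{-1}\dot h^{\nu,\eps}$ are controlled only through $|\dot h^{\nu,\eps}|\le K$ and $|\omega^{\nu,\eps}|\le K\eps^{-1}$ from Corollary~\ref{Cor:h} at $\alpha=3$ (this is why $\alpha>1$ is needed in general); and $\|\nabla(\widebar u^{\nu,\eps}-\widebar u)\|_{L^2}$ is $O(\eps^{1/2})$, not $O(\eps^{3/2})$, so the Gr\"onwall remainder is $O(\nu+\eps)$ rather than $O(\nu+\eps^2)$, which is still fine since $\eps\le k\nu$.
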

\begin{remark}
	Under the assumption $1<\alpha\leq\beta<3$, we prove that the \textit{Leray--Hopf} weak solution converges strongly to the solution of the Euler equations. Moreover, the critical finite-mass scaling case $\rho_s^{\nu,\eps}=\rho_s\eps^{-3}$ can be achieved provided that the initial linear velocity of the rigid body vanishes. The bound $\beta\leq3$ is sharp: for $\beta>3$, the initial kinetic energy $E_{\mathrm{body}}^{\nu,\eps}(0)$ of the rigid body diverges as $\nu,\eps\to0$. The lower bound condition $\alpha>1$ is imposed
	to ensure that the boundary corrector constructed in the proof remains a
	small perturbation, which is necessary to compensate for the mismatch
	between $u^{\nu,\eps}$ and $\widebar u$ on the moving boundary.
	
	It is worth mentioning that, when $\nu>0$ is fixed, weak convergence was
	established in \cite{he2025vanishing} for the case
	$\alpha=\beta=0$ by using the $L^p$-$L^q$ estimates of the
	fluid--structure semigroup. However, this approach does not seem directly
	applicable to the present setting, since these estimates depend on the
	viscosity parameter $\nu$. Therefore, whether convergence can be obtained
	for $0\leq\alpha\leq 1$ remains an interesting open problem.
\end{remark}

The paper is organized as follows. In Section 2, we establish several auxiliary estimates and construct the boundary corrector required for the proof of the main results. In Section 3, we prove Theorems \ref{Thm:main} and \ref{Thm:endpoint}. In the final section, we discuss the assumption \eqref{Assumption-initial} and further consequences.

\section{Auxiliary Estimates and Boundary Correctors}

\begin{lemma} \label{Lm:m-j}
Suppose that the density $\rho_s^{\nu,\eps}$ of the rigid body satisfies the assumptions stated in Theorem \ref{Thm:main}. Then we have
	\begin{equation}\label{Est:mass}
		\begin{split}
			K_1\eps^{3-\alpha} \leq m^{\nu,\eps} \leq K_2\eps^{3-\beta},
		\end{split}
	\end{equation}
	and for all $v \in \mathbb{R}^3$,
	\begin{equation}\label{Est:inertia}
		K_1\eps^{5-\alpha}|v|^2 \leq (J^{\nu,\eps}v)\cdot v \leq K_2\eps^{5-\beta}|v|^2,
	\end{equation}
	where $K_1$ and $K_2$ are constants independent of $\nu$ and $\eps$.
\end{lemma}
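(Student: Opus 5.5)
The plan is a scaling argument in two parts: the mass is handled directly, and for the inertia matrix I first remove the time dependence of the body's position, then change variables to the reference body $\Omega$.

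\textbf{Mass.} By definition $m^{\nu,\eps}=\rho_s^{\nu,\eps}\vol(\Omega_\eps)=\rho_s^{\nu,\eps}\eps^3\vol(\Omega)$. Inserting the density bounds $\rho_s\eps^{-\alpha}\le\rho_s^{\nu,\eps}\le\rho_s\eps^{-\beta}$ of Theorem \ref{Thm:main} gives \eqref{Est:mass} with $K_1=K_2=\rho_s\vol(\Omega)$.

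\textbf{Removing the time dependence in $J^{\nu,\eps}$.} The body at time $t$ is a rigid displacement of the initial body: $\Omega_{\nu,\eps}(t)=h^{\nu,\eps}(t)+Q(t)\Omega_\eps$ for some rotation $Q(t)\in SO(3)$. I would substitute $x=h^{\nu,\eps}(t)+Q(t)y$ with $y\in\Omega_\eps$; the Jacobian equals one. The integrand has the form $|z|^2\delta_{ij}-z_iz_j$ with $z=Qy$, so this substitution gives $J^{\nu,\eps}(t)=Q(t)J_0^{\nu,\eps}Q(t)^\top$. Since $Q$ is orthogonal,
\[
(J^{\nu,\eps}(t)v)\cdot v=(J_0^{\nu,\eps}w)\cdot w,\qquad w=Q(t)^\top v,\quad |w|=|v|.
\]
It therefore suffices to prove \eqref{Est:inertia} for $J_0^{\nu,\eps}$.

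\textbf{Scaling $J_0^{\nu,\eps}$.} For $J_0^{\nu,\eps}$ I would use the identity
\[
(J_0^{\nu,\eps}w)\cdot w=\rho_s^{\nu,\eps}\int_{\Omega_\eps}\bigl(|y|^2|w|^2-(y\cdot w)^2\bigr)\dif y=\rho_s^{\nu,\eps}\int_{\Omega_\eps}|y\times w|^2\dif y.
\]
Scaling $y=\eps z$ with $z\in\Omega$ turns this into $\rho_s^{\nu,\eps}\eps^5\int_\Omega|z\times w|^2\dif z$.

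Let $j(w)=\int_\Omega|z\times w|^2\dif z$, a continuous quadratic form on $\R^3$. On the unit sphere it is bounded above by $\int_\Omega|z|^2\dif z$. It is strictly positive there: $j(w)=0$ would force $z\times w=0$ for a.e.\ $z\in\Omega$, i.e.\ $\Omega$ would lie in a line up to a null set, which is impossible since $\Omega$ is open and nonempty. By compactness of the sphere, $c|w|^2\le j(w)\le C|w|^2$ with constants $c,C>0$ depending only on $\Omega$.

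Combining this with the density bounds gives \eqref{Est:inertia}, with $K_1=\rho_s c$ and $K_2=\rho_s C$. Taking minima and maxima with the constants from the mass step, $K_1$ and $K_2$ can be chosen the same in both estimates.

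\textbf{Main obstacle.} The only point needing care is the time dependence of $J^{\nu,\eps}(t)$. This is resolved by the rigid-motion representation of $\Omega_{\nu,\eps}(t)$, which reduces everything to the fixed reference matrix $J_0^{\nu,\eps}$. The strict positivity of $j$ on the unit sphere is elementary.
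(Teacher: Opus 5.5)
Your proof is correct and follows essentially the same route as the paper. The mass bound comes directly from $\vol(\Omega_\eps)=\eps^3\vol(\Omega)$. For the inertia, you reduce to $\rho_s^{\nu,\eps}\eps^5$ times the fixed matrix $I_\Omega$ by a rigid-motion change of variables followed by scaling, which the paper does in one step via $x=\eps R^{\nu,\eps}(t)y+h^{\nu,\eps}(t)$. Your Lagrange-identity argument $\int_\Omega|z\times w|^2\dif z>0$ supplies the justification for the positive definiteness of $I_\Omega$, which the paper only asserts.
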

\begin{proof}
	Notice that
	\begin{equation}
		m^{\nu,\eps}
		=
		\rho_s^{\nu,\eps}\vol(\Omega_\eps)
		=
		\rho_s^{\nu,\eps}\eps^3\vol(\Omega),
	\end{equation}
	which, together with the assumptions on $\rho_s^{\nu,\eps}$, yields
	\eqref{Est:mass}.
	
	We next prove \eqref{Est:inertia}. Let
	$R^{\nu,\eps}(t)$ be the rotation matrix associated with the rigid body
	motion, which satisfies
	\begin{equation}\label{Def:R}
		\begin{cases}
			\dot R^{\nu,\eps}(t)
			=
			A(\omega^{\nu,\eps}(t))
			R^{\nu,\eps}(t),
			\qquad t\geq0,
			\\[1mm]
			R^{\nu,\eps}(0)=\mathbb I_3,
		\end{cases}
	\end{equation}
	where
	\begin{equation}
		A(\omega^{\nu,\eps})
		=
		\begin{pmatrix}
			0&-\omega_3^{\nu,\eps}&\omega_2^{\nu,\eps}\\
			\omega_3^{\nu,\eps}&0&-\omega_1^{\nu,\eps}\\
			-\omega_2^{\nu,\eps}&\omega_1^{\nu,\eps}&0
		\end{pmatrix}.
	\end{equation}
	 Since $A(\omega^{\nu,\eps})$ is skew-symmetric,  $R^{\nu,\eps}(t)$ is an orthogonal matrix. Using the change of variables
	\[
	x=\eps R^{\nu,\eps}(t)y+h^{\nu,\eps}(t),
	\]
	we obtain
	\begin{equation*}
			(J^{\nu,\eps}v)\cdot v
			=\rho_s^{\nu,\eps}\eps^5 v^\top R^{\nu,\eps}	I_\Omega  (R^{\nu,\eps})^\top v,
	\end{equation*}
where
	\[
	I_\Omega
	=
	\left(
	\int_{\Omega}
	(\delta_{ij}|y|^2-y_iy_j)\dif y
	\right)_{1\leq i,j\leq3}.
	\]
	Since $\Omega$ is a bounded open domain, the matrix $I_\Omega$ is positive
	definite. Consequently, there exist constants $c_\Omega,C_\Omega>0$ such
	that
	\[
	c_\Omega |w|^2
	\leq
	w^\top I_\Omega w
	\leq
	C_\Omega |w|^2, \quad w \in \R^3.
	\]
	Finally, applying the assumptions
	\[
	\rho_s\eps^{-\alpha}
	\leq
	\rho_s^{\nu,\eps}
	\leq
	\rho_s\eps^{-\beta},
	\]
	we conclude that
	\[
	K_1\eps^{5-\alpha}|v|^2
	\leq
	(J^{\nu,\eps}v)\cdot v
	\leq
	K_2\eps^{5-\beta}|v|^2 .
	\]
	This proves \eqref{Est:inertia}.
\end{proof}
Combining the energy inequality \eqref{Identity:energy} with the above lemma, we immediately obtain the following estimates.
\begin{corollary}\label{Cor:h}
Under the assumptions of Theorem \ref{Thm:main}, there exists a constant $K$ independent of $\nu$ and $\eps$ such that
	\begin{equation}\label{Est:dot-h}
		|\dot{h}^{\nu,\eps}(t)| \leq K\eps^{(\alpha-3)/2} \hbox{ and } |\omega^{\nu,\eps}(t)| \leq K\eps^{(\alpha-5)/2}, \quad \forall t\in [0, \infty).
	\end{equation}
\end{corollary}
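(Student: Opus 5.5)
The argument combines the energy inequality \eqref{Identity:energy} with the lower bounds of Lemma~\ref{Lm:m-j}. The first step is to discard the nonnegative fluid kinetic energy and the dissipation term on the left of \eqref{Identity:energy}. This leaves, for every $t\ge0$,
\[
E_{\mathrm{body}}^{\nu,\eps}(t)\le \frac12\|u_0^{\nu,\eps}\|_{L^2(\mathcal F_\eps)}^2+E_{\mathrm{body}}^{\nu,\eps}(0).
\]
The inequality holds for all $t\in[0,\infty)$, since weak solutions exist for every $T>0$ and the energy inequality holds on each interval. This is why the corollary is stated on $[0,\infty)$.

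Next I bound the right-hand side uniformly in $\nu,\eps$.
\begin{itemize}
\item \emph{Fluid term.} By \eqref{Assumption-initial}, $\|u_0^{\nu,\eps}\|_{L^2(\mathbb R^3)}\le \|\widebar u_0\|_{L^2}+1$ for $\nu,\eps$ small. For the remaining range of $\nu,\eps$ one assumes $L^2$-boundedness of the initial data, which is implicit in the setting.
\item \emph{Body term.} By the upper bounds in Lemma~\ref{Lm:m-j},
\[
E_{\mathrm{body}}^{\nu,\eps}(0)\le \tfrac12 K_2\bigl(\eps^{3-\beta}|\ell_0|^2+\eps^{5-\beta}|\omega_0|^2\bigr).
\]
Lemma~\ref{Lm:m-j} is stated for $J^{\nu,\eps}(t)$, but its proof applies verbatim to $J_0^{\nu,\eps}$ with $R=\mathbb I_3$ and $h=0$. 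Since $\beta<3$ and $\eps<1$, the right-hand side is at most $\tfrac12K_2(|\ell_0|^2+|\omega_0|^2)$, which is bounded independently of $\nu,\eps$.
\end{itemize}
Together these give $E_{\mathrm{body}}^{\nu,\eps}(t)\le C$ with $C$ independent of $\nu$, $\eps$ and $t$.

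Finally, I apply the lower bounds of Lemma~\ref{Lm:m-j}. Since $E_{\mathrm{body}}^{\nu,\eps}$ is a sum of two nonnegative terms, each term is at most $C$. The mass bound gives
\[
\tfrac12K_1\eps^{3-\alpha}|\dot h^{\nu,\eps}(t)|^2\le \tfrac12 m^{\nu,\eps}|\dot h^{\nu,\eps}(t)|^2\le C,
\]
hence $|\dot h^{\nu,\eps}(t)|\le K\eps^{(\alpha-3)/2}$. Applying \eqref{Est:inertia} with $v=\omega^{\nu,\eps}(t)$ gives
\[
\tfrac12K_1\eps^{5-\alpha}|\omega^{\nu,\eps}(t)|^2\le C,
\]
hence $|\omega^{\nu,\eps}(t)|\le K\eps^{(\alpha-5)/2}$.

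There is no real obstacle here. The only point needing care is the uniform bound on the initial energy, where $\beta\le3$ is used to keep $E_{\mathrm{body}}^{\nu,\eps}(0)$ bounded.
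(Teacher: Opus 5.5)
Your proposal is correct and follows the paper's argument, which simply combines the energy inequality \eqref{Identity:energy} with the lower bounds of Lemma~\ref{Lm:m-j}. You additionally make explicit the uniform bound on the initial energy that the paper leaves implicit. That bound comes from \eqref{Assumption-initial} for the fluid part, and from the upper bounds of Lemma~\ref{Lm:m-j} together with $\beta<3$ for the body part. The $L^2$-boundedness of $u_0^{\nu,\eps}$ when $\nu,\eps$ are not small, which you flag, is indeed an implicit assumption in the paper as well.
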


Recall that $\widebar u$ denotes the smooth Euler solution in $\mathbb{R}^3$,
which does not vanish on the boundary $\partial\Omega_{\nu,\eps}(t)$.
To estimate $u^{\nu,\eps}-\widebar u$, we construct a family of approximations
$\{\widebar u^{\nu,\eps}\}$ of $\widebar u$ such that
$\widebar u^{\nu,\eps}$ vanishes on $\partial\Omega_{\nu,\eps}(t)$. Let $\widebar \Psi$ be an arbitrary stream function of $\widebar u$, satisfying \(\curl\widebar \Psi=\widebar u\), and denote
\begin{equation*}
	\widebar \Psi^{\nu,\eps}(x,t) = \widebar \Psi(x, t) - \widebar \Psi(h^{\nu,\eps}(t), t).
\end{equation*}

Let $R>0$ be such that the ball of radius $R$, centered at the origin,
contains $\Omega$. Let $\varphi$ be a smooth function on $\mathbb{R}^3$
such that $\varphi(x)\equiv0$ if $0\leq |x|\leq R+1$,
$\varphi\geq0$, and $\varphi\equiv1$ if $|x|>R+2$. We set
\[
\varphi^{\nu,\eps}(x,t)
=
\varphi\left(\frac{x-h^{\nu,\eps}(t)}{\eps}\right)
\]
and define $\widebar u^{\nu,\eps}$ by
\begin{equation}
	\widebar u^{\nu,\eps}
	=
	\curl(\varphi^{\nu,\eps}\widebar \Psi^{\nu,\eps}).
\end{equation}
It can be checked that the vector field $\widebar u^{\nu,\eps}$ is
divergence-free and vanishes in a neighborhood of the boundary
$\partial\Omega_{\nu,\eps}(t)$.
\begin{lemma}\label{Lm:boundary-1}
	There exists a constant $K>0$ such that, for any
	$0<\nu,\eps<1$ and $0\leq t\leq T$, we have the following estimates:
\begin{equation}\label{Est:approximation-boundary}
	\left\{
	\begin{aligned}
		\|\nabla\varphi^{\nu,\eps}
		\big(\widebar p-\widebar p(h^{\nu,\eps}(t),t)\big)\|_{L^2}
		+
		\|\nabla\varphi^{\nu,\eps}\times\widebar \Psi^{\nu,\eps}\|_{L^2}
		&\leqslant K\eps^{3/2},
		\\
		\|\widebar u^{\nu,\eps} - \varphi^{\nu,\eps}\widebar u\|_{L^2} + \|\widebar u^{\nu,\eps}-\widebar u\|_{L^2}
		&\leqslant K\eps^{3/2},
		\\
		\|\widebar u^{\nu,\eps}\|_{L^\infty}\|\nabla \widebar u^{\nu,\eps} - \nabla \widebar u\|_{L^2}
		&\leqslant K\eps^{1/2},
		\\
		\|\partial_t\varphi^{\nu,\eps}\widebar u
		+
		\partial_t\nabla\varphi^{\nu,\eps}\times\widebar \Psi^{\nu,\eps}
		+
		\nabla\varphi^{\nu,\eps}\times\partial_t\widebar \Psi^{\nu,\eps}
		\|_{L^2}
		&\leqslant K\eps^{\alpha/2-1},
		\\
		\|\nabla\partial_i\varphi^{\nu,\eps}\times\widebar \Psi^{\nu,\eps}
		+
		\partial_i\varphi^{\nu,\eps}\widebar u
		+
		\nabla\varphi^{\nu,\eps}\times\partial_i\widebar \Psi^{\nu,\eps}
		\|_{L^\infty}
		&\leqslant K\eps^{-1},  i=1,2,3 .
	\end{aligned}
	\right.
\end{equation}
	where the $L^2$-norm and $L^\infty$-norm are defined on the domain
	$\mathcal{F}_{\nu,\eps}(t)$.
\end{lemma}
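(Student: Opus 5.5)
The plan is to exploit that every term in \eqref{Est:approximation-boundary} involves either a derivative of $\varphi^{\nu,\eps}$ or the difference $1-\varphi^{\nu,\eps}$. Both are supported in the ball $B(h^{\nu,\eps}(t),(R+2)\eps)$, whose volume is $O(\eps^3)$, and derivatives of $\varphi^{\nu,\eps}$ cost a factor $\eps^{-1}$ each. Since $\overline u\in C([0,T];H^3)\cap C^1([0,T];H^2)$, Sobolev embedding gives $\overline u,\nabla\overline u,\partial_t\overline u\in L^\infty$ uniformly on $[0,T]$. I will fix the stream function as $\overline\Psi=\curl(-\Delta)^{-1}\overline u$ (Biot--Savart), so that $\nabla\overline\Psi\in H^3$ and $\partial_t\nabla\overline\Psi\in H^2$. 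Then $\nabla\overline\Psi$ and $\partial_t\nabla\overline\Psi$ are bounded, and so is $\nabla\overline p$ (via $\Delta\overline p=-\div(\overline u\cdot\nabla\overline u)$). The key pointwise bound is the mean value estimate: on the support, $|x-h^{\nu,\eps}|\le(R+2)\eps$, hence
\[
|\overline\Psi^{\nu,\eps}(x,t)|\le K\eps, \qquad |\overline p(x,t)-\overline p(h^{\nu,\eps}(t),t)|\le K\eps .
\]

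For the first estimate, $|\nabla\varphi^{\nu,\eps}|\le K\eps^{-1}$, which combined with the pointwise bounds gives integrands of size $O(1)$ on a set of measure $O(\eps^3)$, hence $K\eps^{3/2}$. For the second, I expand
\[
\overline u^{\nu,\eps}=\varphi^{\nu,\eps}\overline u+\nabla\varphi^{\nu,\eps}\times\overline\Psi^{\nu,\eps}
\]
(note $\curl\overline\Psi^{\nu,\eps}=\overline u$). The first difference is then just the previous term. The second difference adds $(1-\varphi^{\nu,\eps})\overline u$, which is bounded and supported on a set of measure $O(\eps^3)$. For the third, $\|\overline u^{\nu,\eps}\|_{L^\infty}\le K$ by the same expansion. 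Also $\nabla\overline u^{\nu,\eps}-\nabla\overline u$ consists of $(\varphi^{\nu,\eps}-1)\nabla\overline u$ plus terms of the form $\nabla\varphi\,\overline u$, $\nabla\varphi\,\nabla\overline\Psi$ and $\nabla^2\varphi\,\overline\Psi^{\nu,\eps}$. Each of these is pointwise $O(\eps^{-1})$ on the ball, giving $L^2$ norm $K\eps^{-1}\eps^{3/2}=K\eps^{1/2}$. The fifth estimate is the same pointwise computation without integration: $\nabla\partial_i\varphi\times\overline\Psi^{\nu,\eps}$ is $O(\eps^{-2}\cdot\eps)$, and the other two terms are $O(\eps^{-1})$.

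The fourth estimate is the main obstacle, because time derivatives bring in the body velocity. I have
\[
\partial_t\varphi^{\nu,\eps}=-\eps^{-1}\nabla\varphi\big(\tfrac{x-h^{\nu,\eps}}{\eps}\big)\cdot\dot h^{\nu,\eps},
\]
and Corollary \ref{Cor:h} gives $|\dot h^{\nu,\eps}|\le K\eps^{(\alpha-3)/2}$. So $\partial_t\varphi^{\nu,\eps}\,\overline u$ is pointwise $O(\eps^{-1}\eps^{(\alpha-3)/2})$, and its $L^2$ norm is $O(\eps^{-1+(\alpha-3)/2+3/2})=O(\eps^{\alpha/2-1})$. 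Next, $\partial_t\nabla\varphi^{\nu,\eps}$ is $O(\eps^{-2}|\dot h^{\nu,\eps}|)$ and is multiplied by $|\overline\Psi^{\nu,\eps}|\le K\eps$, giving the same order. Finally,
\[
\partial_t\overline\Psi^{\nu,\eps}=\partial_t\overline\Psi(x,t)-\partial_t\overline\Psi(h^{\nu,\eps},t)-\nabla\overline\Psi(h^{\nu,\eps},t)\,\dot h^{\nu,\eps} .
\]
The first difference is $O(\eps)$ by the mean value theorem using bounded $\nabla\partial_t\overline\Psi$. The last term is $O(\eps^{(\alpha-3)/2})$. Multiplying by $|\nabla\varphi^{\nu,\eps}|\le K\eps^{-1}$ and integrating over the ball again yields $K\eps^{\alpha/2-1}$. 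The remaining care is in justifying that $h^{\nu,\eps}$ is Lipschitz in time, so that the chain rule applies, and that $\overline\Psi$ has the stated regularity in time. Both follow from Corollary \ref{Cor:h} and from $\overline u\in C^1([0,T];H^2)$ composed with the Biot--Savart law.
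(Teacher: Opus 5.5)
Your proposal is correct and follows essentially the paper's proof: you localize everything to $B(h^{\nu,\eps}(t),(R+2)\eps)$, which has measure $O(\eps^3)$, pay a factor $\eps^{-1}$ per derivative of $\varphi^{\nu,\eps}$, use the mean-value bounds $|\widebar\Psi^{\nu,\eps}|+|\widebar p-\widebar p(h^{\nu,\eps}(t),t)|\le K\eps$ there, and invoke Corollary \ref{Cor:h} for the time derivatives through $\partial_t\varphi^{\nu,\eps}$ and $\partial_t\widebar\Psi^{\nu,\eps}$. Your explicit Biot--Savart choice $\widebar\Psi=\curl(-\Delta)^{-1}\widebar u$ is a worthwhile precision, since the paper's ``arbitrary'' stream function does not by itself guarantee the bounds on $\nabla\widebar\Psi$ and $\partial_t\nabla\widebar\Psi$ that both arguments actually use.
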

\begin{proof} Let us denote	\begin{equation}\label{Def:C-eps-nu}
		\mathcal{C}_{\nu,\eps}(t) = \big\{ x \in \mathbb{R}^3 \ \big| \ (R+1)\eps < |x-h^{\nu,\eps}(t)| < (R+2)\eps\big\}.
	\end{equation}
	Recall that $\widebar u$, $\nabla \widebar u$ are uniformly bounded in $\mathbb{R}^3$ and $\widebar \Psi^{\nu,\eps}(h(t), t) = 0$, we have
	\begin{equation}
		\|\widebar \Psi^{\nu,\eps}\|_{L^\infty(\mathcal{C}_{\nu,\eps})} + \|\widebar p - \widebar p(h(t), t)\|_{L^\infty(\mathcal{C}_{\nu,\eps})} \leqslant K\eps.
	\end{equation} 
	Furthermore, from the definition of $\varphi^{\nu,\eps}$ and $\widebar \Psi^{\nu,\eps}$, we can see that $\nabla \varphi^{\nu,\eps}$ is supported in the closure of $C_{\nu,\eps}(t)$ and satisfies
	\begin{equation}
		\|\nabla \varphi^{\nu,\eps}\|_{L^\infty} \leqslant K\eps^{-1},
	\end{equation}
	which yields the first inequality in \eqref{Est:approximation-boundary} by combining with the fact that the Lebesgue measure of $\mathcal{C}_{\nu,\eps}$ is $\mathcal{O}(\eps^3)$.
	
	To consider the second inequality in \eqref{Est:approximation-boundary}, it suffices to rewrite $\widebar u^{\nu,\eps} - \widebar u$ as:
	\begin{equation}
	\widebar u^{\nu,\eps} - \widebar u = (\varphi^{\nu,\eps}-1)\widebar u + \nabla \varphi^{\nu,\eps}\times \widebar \Psi^{\nu,\eps}.
	\end{equation}
	It can be checked that each term on the right hand side of the above equation is uniformly bounded with respect to $\nu$ and $\eps$, and supported in the closure of $\mathcal{C}_{\nu,\eps}$. Hence the second inequality holds true. The third bound and the last bound follow by similar support and boundedness arguments, we omit the details. 
	
	To examine the second to last bound, we need to rewrite $\partial_t\varphi^{\nu,\eps}$ and $\partial_t \widebar\Psi^{\nu,\eps}$ as
	\begin{equation*}
			\partial_t \varphi^{\nu,\eps}(x, t) = - \frac{\dot{h}^{\nu,\eps}(t)}{\eps} \cdot(\nabla\varphi)\Big(\frac{x - h^{\nu,\eps}( t)}{\eps}\Big),
	\end{equation*}
	and
	\begin{equation*}
			\partial_t \widebar\Psi^{\nu,\eps}(x, t) =  (\partial_t \widebar\Psi)(x, t) -(\partial_t \widebar\Psi)(h^{\nu,\eps}(t), t) - (\nabla \widebar\Psi)(h^{\nu,\eps}(t), t) \cdot \dot{h}^{\nu,\eps}(t),
	\end{equation*}
	respectively.	Notice that $\partial_t \widebar \Psi^{\nu,\eps}$ can be viewed as a stream function of $\partial_t \widebar u$, hence
	\begin{equation*}
		\|(\partial_t \widebar\Psi)(x, t) -(\partial_t \widebar\Psi)(h^{\nu,\eps}(t), t)\|_{L^\infty(\mathcal{C}_{\nu,\eps})} \leq K\eps.
	\end{equation*}
	The second to last bound then follows by combining with Corollary \ref{Cor:h}.
\end{proof}

Recall that the fluid domain in the fluid--rigid system is time-dependent, and since the velocity $u^{\nu,\eps}$ does not vanish on $\partial\Omega_{\nu,\eps}$, integration by parts cannot be applied directly on the fluid domain. To circumvent this issue, we construct a boundary corrector $\Lambda^{\nu,\eps}$ that agrees with $u^{\nu,\eps}$ in $\Omega_{\nu,\eps}(t)$. To this end, we introduce, for $x\in\mathbb R^3$, the auxiliary quantities
\begin{equation}\label{Def:V}
	\Phi^{\nu,\eps}(x,t) = -\frac{1}{2}\dot{h}^{\nu,\eps}(t)\times\bigl(x-h^{\nu,\eps}(t)\bigr)
	+\frac{1}{2}\bigl|x-h^{\nu,\eps}(t)\bigr|^2\omega^{\nu,\eps}(t),
\end{equation}
and
\begin{equation}\label{Def:g}
	v^{\nu,\eps}(x,t) = \dot{h}^{\nu,\eps}(t)+\omega^{\nu,\eps}(t)\times\bigl(x-h^{\nu,\eps}(t)\bigr).
\end{equation}
With these at hand, we define the corrector $\Lambda^{\nu,\eps}$ by
\begin{equation}\label{Def:Lambda}
	\Lambda^{\nu,\eps}(x,t)=(1-\varphi^{\nu,\eps})v^{\nu,\eps}
	+\nabla\varphi^{\nu,\eps}\times\Phi^{\nu,\eps}.
\end{equation}

The following lemma collects the key properties of $\Lambda^{\nu,\eps}$.
\begin{lemma}\label{Lm:boundary-corrector}
	Assume that $0<\nu,\eps<1$, and let $\Lambda^{\nu,\eps}$ be defined by \eqref{Def:Lambda}. Then $\Lambda^{\nu,\eps}$ is solenoidal in $\mathbb R^3$ and satisfies
	\begin{equation}\label{Est:corrector-1}
		\Lambda^{\nu,\eps}(x,t)=
		\begin{cases}
			0, & x\in \mathbb R^3\setminus B\bigl(h^{\nu,\eps}(t),(R+2)\eps\bigr),\\[2mm]
			v^{\nu,\eps}(x,t), & x\in\Omega_{\nu,\eps}(t).
		\end{cases}
	\end{equation}
	Moreover, there exists a constant $K>0$, independent of $\nu$ and $\eps$, such that for $k=0,1$,
	\begin{equation}\label{Est:lambda}
	\|\nabla^k \Lambda^{\nu,\eps}\|_{L^2(\mathbb R^3)}\le K\eps^{\alpha/2-k},\qquad
	\|\nabla^k \Lambda^{\nu,\eps}\|_{L^\infty(\mathbb R^3)}\le K\eps^{\alpha/2-1-k}.
	\end{equation}
\end{lemma}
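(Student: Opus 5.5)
The plan is to verify the algebraic properties first and then obtain the estimates by scaling, using Lemma \ref{Lm:m-j} and Corollary \ref{Cor:h}.

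\textbf{Solenoidality and \eqref{Est:corrector-1}.} A direct computation gives
\[
\curl\Phi^{\nu,\eps}
=-\tfrac12\curl\bigl(\dot h^{\nu,\eps}\times(x-h^{\nu,\eps})\bigr)+\tfrac12\nabla|x-h^{\nu,\eps}|^2\times\omega^{\nu,\eps}
=-\dot h^{\nu,\eps}-\omega^{\nu,\eps}\times(x-h^{\nu,\eps})=-v^{\nu,\eps}.
\]
Hence
\[
\curl\bigl((1-\varphi^{\nu,\eps})\Phi^{\nu,\eps}\bigr)=-\nabla\varphi^{\nu,\eps}\times\Phi^{\nu,\eps}-(1-\varphi^{\nu,\eps})v^{\nu,\eps}=-\Lambda^{\nu,\eps},
\]
so $\Lambda^{\nu,\eps}$ is a curl and therefore divergence free in $\mathbb R^3$. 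For the values of $\Lambda^{\nu,\eps}$:
\begin{itemize}
\item Outside $B(h^{\nu,\eps}(t),(R+2)\eps)$ we have $\varphi^{\nu,\eps}=1$ and $\nabla\varphi^{\nu,\eps}=0$, so $\Lambda^{\nu,\eps}=0$.
\item The body $\Omega_{\nu,\eps}(t)=h^{\nu,\eps}(t)+\eps R^{\nu,\eps}(t)\Omega$ lies in $B(h^{\nu,\eps}(t),R\eps)$. There $\varphi^{\nu,\eps}=0$ and $\nabla\varphi^{\nu,\eps}=0$, so $\Lambda^{\nu,\eps}=v^{\nu,\eps}$.
\end{itemize}

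\textbf{Pointwise bounds.} Everything is supported in $B(h^{\nu,\eps},(R+2)\eps)$, which has measure $O(\eps^3)$. On this ball we have $|x-h^{\nu,\eps}|\le K\eps$, $|\nabla^j\varphi^{\nu,\eps}|\le K\eps^{-j}$, $|\Phi^{\nu,\eps}|\le K(\eps|\dot h^{\nu,\eps}|+\eps^2|\omega^{\nu,\eps}|)$ and $|\nabla\Phi^{\nu,\eps}|\le K(|\dot h^{\nu,\eps}|+\eps|\omega^{\nu,\eps}|)$. Collecting terms,
\[
|\nabla^k\Lambda^{\nu,\eps}|\le K\eps^{-k}\bigl(|\dot h^{\nu,\eps}|+\eps|\omega^{\nu,\eps}|\bigr)=:K\eps^{-k}M^{\nu,\eps}(t),\qquad k=0,1.
\]
This gives $\|\nabla^k\Lambda^{\nu,\eps}\|_{L^2}\le K\eps^{3/2-k}M^{\nu,\eps}$ and $\|\nabla^k\Lambda^{\nu,\eps}\|_{L^\infty}\le K\eps^{-k}M^{\nu,\eps}$.

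\textbf{$L^2$ bound.} By Corollary \ref{Cor:h}, $M^{\nu,\eps}\le K\eps^{(\alpha-3)/2}$. Inserting this gives $\|\nabla^k\Lambda^{\nu,\eps}\|_{L^2}\le K\eps^{\alpha/2-k}$, which is the $L^2$ part of \eqref{Est:lambda}.

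\textbf{$L^\infty$ bound (main obstacle).} The stated bound $K\eps^{\alpha/2-1-k}$ is stronger, by a factor $\eps^{1/2}$, than what Corollary \ref{Cor:h} alone gives: the corollary only yields $M^{\nu,\eps}\le K\eps^{(\alpha-3)/2}$, whereas the claim needs
\[
M^{\nu,\eps}(t)\le K\eps^{\alpha/2-1}.
\]
I would try to obtain this in two steps.
\begin{enumerate}
\item Use the boundary condition $u^{\nu,\eps}=v^{\nu,\eps}$ on $\partial\Omega_{\nu,\eps}(t)$ together with the rigid structure. Since $v^{\nu,\eps}$ is affine in $x$, its $L^2$ norm over the body dominates its sup. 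On $\Omega_{\nu,\eps}(t)$ the extended $u^{\nu,\eps}$ equals $v^{\nu,\eps}$ by \eqref{Def:general-u}, so
\[
\eps^{3}(M^{\nu,\eps})^2\le K\|u^{\nu,\eps}\|_{L^2(\Omega_{\nu,\eps}(t))}^2\le K(\rho_s^{\nu,\eps})^{-1}E^{\nu,\eps}_{\mathrm{body}}(t).
\]
By the energy inequality \eqref{Identity:energy}, $E^{\nu,\eps}_{\mathrm{body}}(t)$ is bounded by the total initial energy.
\item Attempt to gain the missing $\eps^{1/2}$ by bounding $E^{\nu,\eps}_{\mathrm{body}}(0)=\tfrac12\bigl(m^{\nu,\eps}|\ell_0|^2+(J_0^{\nu,\eps}\omega_0)\cdot\omega_0\bigr)$. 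This is $O(\eps^{3-\beta})$ with $3-\beta>0$, hence small. If the body energy can be shown to stay $O(\eps)$ relative to its lower bound, the exponent $(\alpha-2)/2$ follows.
\end{enumerate}
I am not confident that the plain energy inequality allows the body energy to stay this small, because fluid energy can be transferred to the body. This step is where I expect the real difficulty. If it fails, the estimates actually secured are the $L^2$ bounds above together with $\|\nabla^k\Lambda^{\nu,\eps}\|_{L^\infty}\le K\eps^{(\alpha-3)/2-k}$. I would then examine whether the later arguments genuinely require the sharper $L^\infty$ form.
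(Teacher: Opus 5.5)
Your proof of solenoidality (via $\Lambda^{\nu,\eps}=\curl\bigl((\varphi^{\nu,\eps}-1)\Phi^{\nu,\eps}\bigr)$), of \eqref{Est:corrector-1}, and of the $L^2$ bounds is correct. It is the paper's argument made explicit. The paper's proof only remarks that everything is supported in $\overline{B(h^{\nu,\eps}(t),(R+2)\eps)}$, where $|x-h^{\nu,\eps}|=O(\eps)$, and then appeals to Corollary \ref{Cor:h}. Your pointwise bound $|\nabla^k\Lambda^{\nu,\eps}|\le K\eps^{-k}\bigl(|\dot h^{\nu,\eps}|+\eps|\omega^{\nu,\eps}|\bigr)$ is exactly what that remark amounts to.

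You are also right about the $L^\infty$ part, and the paper does no better. The same argument yields only $\|\nabla^k\Lambda^{\nu,\eps}\|_{L^\infty}\le K\eps^{(\alpha-3)/2-k}$, and the exponent $\alpha/2-1-k$ in the statement appears to be a misprint. Two observations support this.
\begin{enumerate}
\item On a support of measure $O(\eps^3)$, the claimed sup bound would imply $\|\nabla^k\Lambda^{\nu,\eps}\|_{L^2}\le K\eps^{(\alpha+1)/2-k}$. That is strictly better than the $L^2$ bound the lemma itself states.
\item $\Lambda^{\nu,\eps}=v^{\nu,\eps}$ on the body, and all norms of rigid fields over the fixed shape $\Omega$ are comparable. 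Hence $\|\Lambda^{\nu,\eps}\|_{L^\infty}\ge c\bigl(|\dot h^{\nu,\eps}|+\eps|\omega^{\nu,\eps}|\bigr)$. For $\rho_s^{\nu,\eps}=\rho_s\eps^{-\alpha}$, the $k=0$ claim is therefore equivalent to the a priori bound $E^{\nu,\eps}_{\mathrm{body}}(t)\le K\eps$ for all $t$ and all $0<\nu,\eps<1$.
\end{enumerate}

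This is why your two-step plan cannot work and should be dropped. Step 1 just reproduces Corollary \ref{Cor:h}. Step 2 needs precisely this $O(\eps)$ bound, whereas the energy inequality gives only $E^{\nu,\eps}_{\mathrm{body}}(t)\le\tfrac12\|u_0^{\nu,\eps}\|_{L^2}^2+E^{\nu,\eps}_{\mathrm{body}}(0)=O(1)$. Even the conclusion of Theorem \ref{Thm:main}, which it would be circular to use, gives only a bound of order $\|u_0^{\nu,\eps}-\widebar u_0\|_{L^2}^2+\nu+\eps^{\min(3-\beta,\alpha-1)}$. That is not $O(\eps)$ when, e.g., $\nu\gg\eps$.

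Your fallback is the correct conclusion, and it costs nothing. The lemma is used only in Proposition \ref{Pro:w}, which needs just $\|\Lambda^{\nu,\eps}\|_{L^2}+\eps\|\nabla\Lambda^{\nu,\eps}\|_{L^2}\le K\eps^{\alpha/2}$, i.e.\ the $L^2$ bounds you proved.
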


\begin{proof}
	The property \eqref{Est:corrector-1} and the solenoidality of $\Lambda^{\nu,\eps}$ follow directly from the definition. It remains to verify \eqref{Est:lambda}. To this end, we observe that both $\nabla\varphi^{\nu,\eps}$ and $1-\varphi^{\nu,\eps}$ are supported in the closure of $B(h^{\nu,\eps}(t), (R+2)\eps)$, where $|x-h^{\nu,\eps}|=O(\eps)$. Combining this observation with Corollary~\ref{Cor:h} yields the desired bounds and completes the proof.
\end{proof}

Before ending this section, we present a modified Poincar\'e inequality.

\begin{lemma}\label{Lm:poincare}
	There exists a constant $K>0$, independent of $\nu$ and $\eps$,  such that for every $W^{\nu,\eps}\in H^1_0(\mathcal{F}_{\nu,\eps}(t))$,
	\begin{equation}\label{Est:poincare}
		\|W^{\nu,\eps}\|_{L^2(\mathcal A_{\nu,\eps}(t))} \le K\eps\,\|\nabla W^{\nu,\eps}\|_{L^2(\mathcal A_{\nu,\eps}(t))},
	\end{equation}
	where $\mathcal A_{\nu,\eps}(t):=\mathcal{F}_{\nu,\eps}(t)\cap B(h^{\nu,\eps}(t),(R+2)\eps)$.
\end{lemma}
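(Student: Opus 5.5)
The plan is to reduce to a fixed reference configuration by a rigid change of variables, prove a Poincaré inequality there with a constant depending only on $\Omega$, and recover the factor $\eps$ by scaling. Fix $t$ and write $x = h^{\nu,\eps}(t) + \eps R^{\nu,\eps}(t) y$, where $R^{\nu,\eps}(t)$ is the orthogonal matrix from \eqref{Def:R}. This map sends $\Omega$ onto $\Omega_{\nu,\eps}(t)$ and $B(0,R+2)$ onto $B(h^{\nu,\eps}(t),(R+2)\eps)$. Consequently $\mathcal A_{\nu,\eps}(t)$ is the image of the fixed bounded open set
\[
\mathcal A := B(0,R+2)\setminus \overline{\Omega}.
\]
Since $R^{\nu,\eps}(t)$ is orthogonal and translations and rotations preserve Lebesgue measure and Euclidean norms, setting $w(y) = W^{\nu,\eps}(h^{\nu,\eps}(t)+\eps R^{\nu,\eps}(t)y)$ gives
\[
\|W^{\nu,\eps}\|_{L^2(\mathcal A_{\nu,\eps}(t))}^2 = \eps^3\|w\|_{L^2(\mathcal A)}^2,\qquad \|\nabla W^{\nu,\eps}\|_{L^2(\mathcal A_{\nu,\eps}(t))}^2 = \eps^{3}\eps^{-2}\|\nabla w\|_{L^2(\mathcal A)}^2 .
\]
So \eqref{Est:poincare} with constant $K$ is equivalent to $\|w\|_{L^2(\mathcal A)}\le K\|\nabla w\|_{L^2(\mathcal A)}$ for the rescaled function. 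This reduces the claim to a single $\eps$-, $\nu$- and $t$-independent inequality.

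Next I check what boundary information $w$ carries. Since $W^{\nu,\eps}\in H^1_0(\mathcal F_{\nu,\eps}(t))$, the rescaled function $w$ lies in $H^1_0(\mathbb R^3\setminus\overline\Omega)$. Its restriction to $\mathcal A$ is in $H^1(\mathcal A)$ and has vanishing trace on $\partial\Omega$, because $\Omega$ is smooth. Nothing is imposed on the outer sphere $|y|=R+2$. The required inequality is therefore a Poincaré inequality on $V := \{w\in H^1(\mathcal A): w|_{\partial\Omega}=0\}$.

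I prove it by the standard compactness argument. Suppose it fails. Then there are $w_n\in V$ with $\|w_n\|_{L^2(\mathcal A)}=1$ and $\|\nabla w_n\|_{L^2(\mathcal A)}\to0$. The set $\mathcal A$ is bounded and Lipschitz: its boundary consists of the smooth surface $\partial\Omega$ and a sphere, and these are disjoint since $\overline\Omega\subset B(0,R)$. By Rellich's theorem a subsequence converges in $L^2(\mathcal A)$, and weakly in $H^1$, to some $w$ with $\nabla w=0$ and $\|w\|_{L^2}=1$. Continuity of the trace and weak closedness of $V$ give $w\in V$.

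The key point is that $\mathcal A$ is connected. Because $\Omega$ is bounded with smooth connected boundary, its complement $\mathbb R^3\setminus\overline\Omega$ is connected. The shell $R\le|y|\le R+2$ is also connected and lies in $\mathcal A$, and every point of $\mathcal A$ can be joined to this shell inside $\mathcal A$ by a path. Hence $w$ is a constant, and its trace on $\partial\Omega$ vanishes, so $w\equiv0$. This contradicts $\|w\|_{L^2}=1$, which proves the inequality on $\mathcal A$ with a constant $K=K(\Omega,R)$.

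The main subtlety is the connectedness of $\mathcal A$. It relies on $\Omega$ having no cavities, which holds under the standing assumption that $\Omega$ is simply connected with smooth boundary. Alternatively, one can argue directly in three dimensions that the unbounded component of the complement of the bounded set $\overline\Omega$ contains every point of $\mathcal A$. Once the reference inequality holds, undoing the scaling yields \eqref{Est:poincare} with the same $K$ for all $t$, $\nu$ and $\eps$.
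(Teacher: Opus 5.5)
Your argument takes the same route as the paper: a rigid change of variables to a fixed configuration, then a Poincar\'e inequality there. The paper only translates and rotates onto $\mathcal A_\eps=\mathcal F_\eps\cap B(0,(R+2)\eps)$ and cites Lemma~3 of Iftimie--Lopes Filho--Nussenzveig Lopes for the $\eps$-dependent constant. You also rescale by $\eps$, which reduces the claim to one $\eps$-independent inequality on $\mathcal A=B(0,R+2)\setminus\overline\Omega$. You then prove that inequality yourself by Rellich compactness. This makes the argument self-contained and makes the power of $\eps$ explicit; for squared norms it is $\eps^2$, not the $\eps$ in the paper's intermediate display. Your scaling identities and the compactness argument are correct.

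The step that is wrong as written is the justification that $\mathcal A$ is connected.
\begin{itemize}
\item In three dimensions, simple connectivity does not exclude cavities and does not make $\partial\Omega$ connected. The shell $\{1<|y|<2\}$ is simply connected, yet the complement of its closure has the bounded component $\{|y|<1\}$.
\item For such an $\Omega$ your $\mathcal A$ is disconnected. Your alternative claim, that the unbounded component of $\mathbb R^3\setminus\overline\Omega$ contains all of $\mathcal A$, is also false.
\end{itemize}

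Connectedness is, however, not needed; what you need is that every component of $\mathcal A$ has part of its boundary on $\partial\Omega$.
\begin{itemize}
\item Let $G$ be a bounded component of $\mathbb R^3\setminus\overline\Omega$. Then $\partial G\subset\partial\Omega\subset\overline{B(0,R)}$. The connected set $\{|y|>R\}$ does not meet $\partial G$, so it cannot meet $G$; otherwise it would lie entirely in $G$, making $G$ unbounded. Hence $G\subset\overline{B(0,R)}$, so $G$ is itself a component of $\mathcal A$, and its whole boundary lies in $\partial\Omega$.
\item The rest of $\mathcal A$ is $U\cap B(0,R+2)$, with $U$ the unbounded component. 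It is connected by your path-to-the-shell argument, and it has $\partial U\subset\partial\Omega$ on its boundary.
\item So every component of $\mathcal A$ has a nonempty boundary piece on $\partial\Omega$. That piece is relatively open in $\partial\Omega$, hence of positive surface measure.
\item The compactness limit $w$ is constant on each component, and the vanishing trace on $\partial\Omega$ forces each constant to be zero.
\end{itemize}
Also use the open shell $R<|y|<R+2$ instead of the closed one: the sphere $|y|=R+2$ is not in $\mathcal A$, and $\overline\Omega$ may touch $|y|=R$. With these two replacements your proof is complete.
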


\begin{proof}
	Let $R^{\nu,\eps}(t)$ be the rotation matrix defined in \eqref{Def:R}. Under the change of variables $x=R^{\nu,\eps}(t)y+h^{\nu,\eps}(t)$, we have
	\begin{equation*}
		\int_{\mathcal A_{\nu,\eps}(t)} |W^{\nu,\eps}(x,t)|^2\,\dif x
		=
		\int_{\mathcal A_\eps} |W^{\nu,\eps}(R^{\nu,\eps}(t)y+h^{\nu,\eps}(t))|^2\,\dif y,
	\end{equation*}
	where $\mathcal A_\eps:=\mathcal{F}_\eps\cap B(0,(R+2)\eps)$. Since $R^{\nu,\eps}(t)$ is orthogonal, applying the classical Poincar\'e inequality (see Lemma~3 in \cite{iftimie2009incompressible}) on the fixed domain $\mathcal A_\eps$ yields
	\begin{equation*}
		\int_{\mathcal A_\eps} |W^{\nu,\eps}(R^{\nu,\eps}(t)y+h^{\nu,\eps}(t))|^2\,\dif y
		\le
		K\eps \int_{\mathcal A_\eps} |\nabla W^{\nu,\eps}(R^{\nu,\eps}(t)y+h^{\nu,\eps}(t))|^2\,\dif y.
	\end{equation*}
	Changing back to the original variable $x$ gives \eqref{Est:poincare}.
\end{proof}

\section{Proof of Theorem \ref{Thm:main} and \ref{Thm:endpoint}} 
Let $u^{\nu,\eps}$ and $\widebar u$ be as described in the introduction. Then we have
\begin{equation}\label{Est:tmp-100}
	\|{u}^{\nu,\eps} - \widebar u\|_{L^2(\mathbb{R}^3)} \leqslant \|{u}^{\nu,\eps} - \widebar u\|_{L^2(\Omega_{\nu,\eps}(t))} +\|{u}^{\nu,\eps} - \widebar u\|_{L^2(\mathcal{F}_{\nu,\eps}(t))}.
\end{equation}

The first term on the right hand side of \eqref{Est:tmp-100} is easy to handle. Indeed, we infer from \eqref{Def:general-u} and Corollary \ref{Cor:h} that $u^{\nu,\eps}$ is $\mathcal{O}(\eps^{(\alpha-3)/2})$ in $\Omega_{\nu,\eps}(t)$. Combining with the fact that the Lebesgue measure of $\Omega_{\nu,\eps}(t)$ is $\mathcal{O}(\eps^3)$, we deduce that
\begin{equation}\label{Est:tmp-1000001} 
	\|{u}^{\nu,\eps} - \widebar u\|_{L^2(\Omega_{\nu,\eps}(t))} \leqslant K\eps^{\alpha/2}.
\end{equation} 

We now focus on the second term on the right hand side of \eqref{Est:tmp-100}. To simplify notations, the $L^2$ and $L^\infty$ norms that appeared in the rest of this section,  unless otherwise specified,  are defined on the domain $\mathcal{F}_{\nu,\eps}(t)$. Observing that ${u}^{\nu,\eps}$ and $\widebar u$ do not match on the boundary $\partial\Omega_{\nu,\eps}(t)$, it is not feasible to estimate this term directly by using the energy method. To handle this issue, we here apply the approximation $\widebar u^{\nu,\eps}$ constructed in Lemma \ref{Lm:boundary-1} to obtain
\begin{equation}\label{Est:split}
	\begin{split}
		\|{u}^{\nu,\eps} - \widebar u\|_{L^2} \leqslant  \|\widebar u^{\nu,\eps} - \widebar u\|_{L^2} + \|{u}^{\nu,\eps} - \widebar u^{\nu,\eps}\|_{L^2}.
	\end{split}
\end{equation}
Lemma \ref{Lm:boundary-1} further tells that
\begin{equation}\label{Est:aa}
	\|\widebar u^{\nu,\eps} - \widebar u\|_{L^2} \leqslant K\eps^{3/2}.
\end{equation}

To estimate the second term on the right hand side of \eqref{Est:split}, we first notice that
\begin{equation}\label{Equ:bar-u}
	\partial_t \widebar u^{\nu,\eps} = -\varphi^{\nu,\eps}(\widebar u\cdot \nabla \widebar u + \nabla \widebar p) + \partial_t \varphi^{\nu,\eps} \widebar u + \partial_t \nabla \varphi^{\nu,\eps} \times \widebar \Psi^{\nu,\eps} + \nabla \varphi^{\nu,\eps} \times \partial_t \widebar \Psi^{\nu,\eps},
\end{equation}
which implies that the error term $W^{\nu,\eps} := {u}^{\nu,\eps} - \widebar u^{\nu,\eps}$ satisfies
\begin{equation}
	\begin{split}
		&\partial_t W^{\nu, \eps} - \nu \Delta W^{\nu,\eps} =  -u^{\nu,\eps}\cdot \nabla u^{\nu,\eps} - \nabla p^{\nu,\eps} + \nu \Delta\widebar{u}^{\nu,\eps}  \\
		&\qquad+ \varphi^{\nu,\eps}(\widebar u\cdot \nabla \widebar u + \nabla \widebar p) - \partial_t \varphi^{\nu,\eps} \widebar u - \partial_t \nabla \varphi^{\nu,\eps} \times \widebar \Psi^{\nu,\eps} - \nabla \varphi^{\nu,\eps}\times\partial_t \widebar \Psi^{\nu,\eps}.
	\end{split}
\end{equation}

However, we cannot directly estimate $W^{\nu,\eps}$ by multiplying the above equation by $W^{\nu,\eps}$ and integrating over $\mathcal{F}_{\nu,\eps}(t)$. This is because $u^{\nu,\eps}$ has weak regularity and does not vanish on the boundary, leading to ill-defined integrals of certain nonlinear and viscous terms. Fortunately, we can use a classical technique proposed in \cite{iftimie2009incompressible} to address this problem. In fact, multiplying the above equation by $W^{\nu,\eps}$  is equivalent to the following combination of operations: multiplying \eqref{Equ:ns-1} by $u^{\nu,\eps}$, and then subtracting both the product of \eqref{Equ:ns-1} with $\widebar u^{\nu,\eps}$ and the product of \eqref{Equ:bar-u} with $W^{\nu,\eps}$. More precisely, this equivalence is obtained as follows. On the one hand, \(u^{\nu,\eps}\) satisfies the energy inequality \eqref{Identity:energy}, which reads
\begin{equation}\label{Identity:energy-proof}
	\frac{1}{2}\|u^{\nu,\eps}\|_{L^2}^2 + E_{\mathrm{body}}^{\nu,\eps}(t)
	+2\nu\int_0^t \|D(u^{\nu,\eps})\|_{L^2}^2\,\dif s
	\le \frac{1}{2}\|u_0^{\nu,\eps}\|_{L^2}^2 + E_{\mathrm{body}}^{\nu,\eps}(0).
\end{equation}

On the other hand, since \(\widebar u^{\nu,\eps}\in C^1([0,T];H^1(\mathbb R^3))\) and \(\widebar u^{\nu,\eps}=0\) in \(\Omega_{\nu,\eps}(s)\) for \(0\le s\le T\), we may take \(\varphi^{\nu,\eps}=\widebar u^{\nu,\eps}\) in the weak formulation \eqref{Equ:weak} to get
\begin{equation}\label{Equ:weak-proof}
	\begin{split}
		&\int_{\mathcal{F}_{\nu,\eps}(t)} u^{\nu,\eps}\cdot \widebar u^{\nu,\eps}\,\dif x
		+2\nu\int_0^t\int_{\mathcal{F}_{\nu,\eps}(s)} D(u^{\nu,\eps}):D(\widebar u^{\nu,\eps})\,\dif x\dif s\\
		&\quad= \int_{\mathcal{F}_\eps} u_0^{\nu,\eps}\cdot \widebar u_0^{\nu,\eps}\,\dif x
		+\int_0^t\int_{\mathcal{F}_{\nu,\eps}(s)} u^{\nu,\eps}\cdot \bigl(\partial_s\widebar u^{\nu,\eps}
		+(u^{\nu,\eps}\cdot\nabla)\widebar u^{\nu,\eps}\bigr)\,\dif x\dif s.
	\end{split}
\end{equation}

Furthermore, since \(\widebar u^{\nu,\eps}\) is smooth, taking the inner product of \eqref{Equ:bar-u} with \(W^{\nu,\eps}\) yields
\begin{equation}\label{Equ:tmp}
	\begin{split}
		&\int_0^t\int_{\mathcal{F}_{\nu,\eps}(s)} \partial_s\widebar u^{\nu,\eps}\cdot W^{\nu,\eps}\,\dif x\dif s\\
		&\quad= -\int_0^t\int_{\mathcal{F}_{\nu,\eps}(s)} \varphi^{\nu,\eps}(\nabla\widebar p+\widebar u\cdot\nabla\widebar u)\cdot W^{\nu,\eps}\,\dif x\dif s\\
		&\qquad+\int_0^t\int_{\mathcal{F}_{\nu,\eps}(s)} \bigl[\partial_t\varphi^{\nu,\eps}\widebar u
		+\partial_t\nabla\varphi^{\nu,\eps}\times\widebar\Psi^{\nu,\eps}
		+\nabla\varphi^{\nu,\eps}\times\partial_t\widebar\Psi^{\nu,\eps}\bigr]\cdot W^{\nu,\eps}\,\dif x\dif s.
	\end{split}
\end{equation}
By the Reynolds transport theorem, the left-hand side of \eqref{Equ:tmp} can be rewritten as
\begin{equation}\label{Equ:rey}
	\begin{split}
		&\int_0^t\int_{\mathcal{F}_{\nu,\eps}(s)} \partial_s\widebar u^{\nu,\eps}\cdot W^{\nu,\eps}\,\dif x\dif s\\
		&\quad= \int_0^t\int_{\mathcal{F}_{\nu,\eps}(s)} \partial_s\widebar u^{\nu,\eps}\cdot u^{\nu,\eps}\,\dif x\dif s
		+\int_0^t\int_{\mathcal{F}_{\nu,\eps}(s)} (u^{\nu,\eps}\cdot\nabla\widebar u^{\nu,\eps})\cdot \widebar u^{\nu,\eps}\,\dif x\dif s\\
		&\qquad-\frac{1}{2}\|\widebar u^{\nu,\eps}\|_{L^2}^2+\frac{1}{2}\|\widebar u_0^{\nu,\eps}\|_{L^2}^2,
	\end{split}
\end{equation}
where $\bar u_0^{\nu,\eps} := \bar u^{\nu,\eps}(\cdot,0)$. Since $\bar u^{\nu,\eps}$ vanishes in a neighbourhood of $\partial\Omega_{\nu,\eps}(s)$, the boundary term arising from the Reynolds transport theorem vanishes. Then, subtracting \eqref{Equ:weak-proof} and \eqref{Equ:tmp} from \eqref{Identity:energy-proof} and applying \eqref{Equ:rey}, we obtain
\begin{equation}\label{Est:sum}
	\begin{split}
		&\frac{1}{2}\|W^{\nu,\eps}\|_{L^2}^2+E_{\mathrm{body}}^{\nu,\eps}(t)
		+2\nu\int_0^t\|D(W^{\nu,\eps})\|_{L^2}^2\,\dif s\\
		&\qquad\qquad\qquad\qquad= \frac{1}{2}\|u_0^{\nu,\eps}-\widebar u_0^{\nu,\eps}\|_{L^2}^2+E_{\mathrm{body}}^{\nu,\eps}(0)+\sum_{i=1}^4 I_i,
	\end{split}
\end{equation}
where
\begin{equation}
	\begin{split}
		I_1 &= -2\nu\int_0^t\int_{\mathcal{F}_{\nu,\eps}(s)} D(\widebar u^{\nu,\eps}):D(W^{\nu,\eps})\,\dif x\dif s,\\
		I_2 &= -\int_0^t\int_{\mathcal{F}_{\nu,\eps}(s)} \bigl(u^{\nu,\eps}\cdot\nabla\widebar u^{\nu,\eps}
		-\varphi^{\nu,\eps}\widebar u\cdot\nabla\widebar u\bigr)\cdot W^{\nu,\eps}\,\dif x\dif s,\\
		I_3 &= \int_0^t\int_{\mathcal{F}_{\nu,\eps}(s)} \varphi^{\nu,\eps}\nabla\widebar p\cdot W^{\nu,\eps}\,\dif x\dif s,\\
		I_4 &= -\int_0^t\int_{\mathcal{F}_{\nu,\eps}(s)} \bigl[\partial_t\varphi^{\nu,\eps}\widebar u
		+\partial_t\nabla\varphi^{\nu,\eps}\times\widebar\Psi^{\nu,\eps}
		+\nabla\varphi^{\nu,\eps}\times\partial_t\widebar\Psi^{\nu,\eps}\bigr]\cdot W^{\nu,\eps}\,\dif x\dif s.
	\end{split}
\end{equation}

Using Lemma \ref{Lm:m-j} and Lemma \ref{Lm:boundary-1}, we have the following estimate for the initial data:
\begin{equation}\label{Est:initial-data}
	\frac{1}{2}\|u_0^{\nu,\eps}-\widebar u_0^{\nu,\eps}\|_{L^2}^2+E_{\mathrm{body}}^{\nu,\eps}(0)
	\le \frac{1}{2}\|u_0^{\nu,\eps}-\widebar u_0\|_{L^2}^2+K\eps^{3-\beta}.
\end{equation}

Thus, the proof of Theorem \ref{Thm:main} and Theorem \ref{Thm:endpoint} reduces to establishing the following bounds for the remainders \(I_i\), which we temporarily assume:
\begin{equation}\label{Est:summary}
	\left\{
	\begin{split}
		I_1 &\le \nu\int_0^t\|D(W^{\nu,\eps})\|_{L^2}^2\,\dif s+K\nu,\\
		I_2 &\le K\Bigl(\eps\int_0^t\|D(W^{\nu,\eps})\|_{L^2}^2\,\dif s
		+\int_0^t\|W^{\nu,\eps}\|_{L^2}^2\,\dif s+\eps^{\min(1,\alpha-1)}\Bigr),\\
		I_3 &\le K\Bigl(\int_0^t\|W^{\nu,\eps}\|_{L^2}^2\,\dif s+\eps^3\Bigr),\\
		I_4 &\le K\Bigl(\eps\int_0^t\|D(W^{\nu,\eps})\|_{L^2}^2\,\dif s+\eps^{\alpha-1}\Bigr).
	\end{split}
	\right.
\end{equation}

Indeed, substituting \eqref{Est:initial-data} and \eqref{Est:summary} into \eqref{Est:sum} yields, for some constant \(K_1>0\),
\begin{equation}\label{Equ:diff-t-new}
	\begin{split}
		&\frac{1}{2}\|W^{\nu,\eps}\|_{L^2}^2+E_{\mathrm{body}}^{\nu,\eps}(t)
		+(\nu-K_1\eps)\int_0^t\|D(W^{\nu,\eps})\|_{L^2}^2\,\dif s\\
		&\quad\le \frac{1}{2}\|u_0^{\nu,\eps}-\widebar u_0\|_{L^2}^2
		+K\Bigl(\int_0^t\|W^{\nu,\eps}\|_{L^2}^2\,\dif s
		+\nu+\eps^{\min(1,\alpha-1,3-\beta)}\Bigr).
	\end{split}
\end{equation}

Choosing \(0<k<K_1^{-1}\) and assuming \(0<\eps\le k\nu\), Gr\"onwall's inequality implies
\begin{equation}\label{Equ:diff-t-new-new}
	\|W^{\nu,\eps}\|_{L^2}+\sqrt{E_{\mathrm{body}}^{\nu,\eps}(t)}
	\le K\Bigl(\|u_0^{\nu,\eps}-\widebar u_0\|_{L^2}
	+\eps^{\min(\alpha-1,3-\beta)/2}+\sqrt{\nu}\Bigr).
\end{equation}
Combining \eqref{Est:tmp-100}-\eqref{Est:aa} with \eqref{Equ:diff-t-new-new} completes the proof of Theorem \ref{Thm:main}.

Although Theorem \ref{Thm:main} is stated under the assumption
\(
1<\alpha\leq\beta<3,
\)
all the estimates in Section 2 and Proposition 3.1 are valid in the critical finite-mass scaling case $\rho_s^{\nu,\eps}=\rho_s\eps^{-3}$ (i.e. \(\alpha=\beta=3\))
provided that \(\ell_0=0\). In fact, since $J_0^{\nu,\eps}=O(\eps^2)$ in this case, the estimate \eqref{Est:initial-data} can be improved to
\begin{equation}
	\frac{1}{2}\|u_0^{\nu,\eps}-\widebar u_0^{\nu,\eps}\|_{L^2}^2+E_{\mathrm{body}}^{\nu,\eps}(0)
	\le \|u_0^{\nu,\eps}-\widebar u_0\|_{L^2}^2+K\eps^2.
\end{equation}
Consequently, 
\begin{equation}\label{Equ:diff-t-new-new-new}
	\|W^{\nu,\eps}\|_{L^2}+\sqrt{E_{\mathrm{body}}^{\nu,\eps}(t)}
	\le K\Bigl(\|u_0^{\nu,\eps}-\widebar u_0\|_{L^2}+\sqrt{\nu}\Bigr),
\end{equation}
which completes the proof of Theorem \ref{Thm:endpoint}.

It remains to verify the estimates in \eqref{Est:summary}. To this end, we first establish an auxiliary proposition.

\begin{proposition}\label{Pro:w}
	There exists a constant \(K>0\), independent of \(\nu\) and \(\eps\), such that for any \(v^{\nu,\eps}\in H^1(\mathcal{F}_{\nu,\eps}(t))\) satisfying \(v^{\nu,\eps}=\Lambda^{\nu,\eps}\) on \(\partial\Omega_{\nu,\eps}(t)\), we have
	\begin{equation}\label{Est:poin}
		\|v^{\nu,\eps}\|_{L^2(\mathcal A_{\nu,\eps}(t))}
		\le K\Bigl(\eps\|D(v^{\nu,\eps})\|_{L^2(\mathcal{F}_{\nu,\eps}(t))}+\eps^{\alpha/2}\Bigr),
	\end{equation}
	where \(\mathcal A_{\nu,\eps}(t)\) is defined in Lemma \ref{Lm:poincare}.
\end{proposition}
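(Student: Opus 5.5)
Write $v^{\nu,\eps}=(v^{\nu,\eps}-\Lambda^{\nu,\eps})+\Lambda^{\nu,\eps}$ and set $Z^{\nu,\eps}:=v^{\nu,\eps}-\Lambda^{\nu,\eps}$. Since $v^{\nu,\eps}=\Lambda^{\nu,\eps}$ on $\partial\Omega_{\nu,\eps}(t)$, $Z^{\nu,\eps}$ vanishes on the body boundary. The outer sphere $\partial B(h^{\nu,\eps}(t),(R+2)\eps)$ needs a word. Either $Z^{\nu,\eps}$ is treated as an $H^1$ function vanishing only on the inner boundary, with Poincar\'e on the annular region $\mathcal A_\eps$ (the proof of Lemma~3 in \cite{iftimie2009incompressible} only uses the zero trace on $\partial\Omega_\eps$). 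Or $Z^{\nu,\eps}$ is multiplied by a cutoff equal to $1$ on $B(h^{\nu,\eps},(R+2)\eps)$ and supported in $B(h^{\nu,\eps},(R+3)\eps)$, with the Poincar\'e lemma applied on the slightly larger ball. After the rigid change of variables $x=R^{\nu,\eps}(t)y+h^{\nu,\eps}(t)$ and the scaling $y=\eps z$, both reduce to a fixed-domain Poincar\'e inequality with zero trace on $\partial\Omega$. Scaling back gives
\[
\|Z^{\nu,\eps}\|_{L^2(\mathcal A_{\nu,\eps}(t))}\le K\eps\|\nabla Z^{\nu,\eps}\|_{L^2(\mathcal A_{\nu,\eps}(t))}.
\]
This is exactly Lemma~\ref{Lm:poincare}, extended to functions vanishing only on the inner boundary.

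Next, control the full gradient by the symmetric part. The gradient $\nabla Z^{\nu,\eps}$ must be bounded by $D(Z^{\nu,\eps})$, and this is the main obstacle. $Z^{\nu,\eps}$ vanishes on $\partial\Omega_{\nu,\eps}(t)$ but not on the outer sphere. I would first try a Korn inequality on the scaled fixed domain $\mathcal A_1=\mathcal F_1\cap B(0,R+2)$ for functions with zero trace on $\partial\Omega$: $\|\nabla w\|_{L^2(\mathcal A_1)}\le C\|D(w)\|_{L^2(\mathcal A_1)}$. This holds by the standard compactness argument, since a rigid motion vanishing on $\partial\Omega$ is zero. The inequality is scale invariant, so transporting it by the rigid change of variables and the dilation gives $\|\nabla Z^{\nu,\eps}\|_{L^2(\mathcal A_{\nu,\eps})}\le K\|D(Z^{\nu,\eps})\|_{L^2(\mathcal A_{\nu,\eps})}$ with $K$ independent of $\eps$. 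Alternatively, if one prefers the cutoff route, extend $Z^{\nu,\eps}$ by zero inside the body. If $v^{\nu,\eps}$ is defined on all of $\mathcal F_{\nu,\eps}(t)$, Korn's identity $\|\nabla w\|^2=2\|D(w)\|^2$ for divergence-free compactly supported fields is available, but divergence-freeness of $v$ is not assumed, so I would rely on the Korn-with-boundary-condition version.

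Combining the two steps,
\[
\|Z^{\nu,\eps}\|_{L^2(\mathcal A_{\nu,\eps})}\le K\eps\bigl(\|D(v^{\nu,\eps})\|_{L^2(\mathcal F_{\nu,\eps}(t))}+\|\nabla\Lambda^{\nu,\eps}\|_{L^2}\bigr)\le K\bigl(\eps\|D(v^{\nu,\eps})\|_{L^2}+\eps\cdot\eps^{\alpha/2-1}\bigr),
\]
where Lemma~\ref{Lm:boundary-corrector} with $k=1$ bounds $\|\nabla\Lambda^{\nu,\eps}\|_{L^2}$. Finally $\|v^{\nu,\eps}\|_{L^2(\mathcal A)}\le\|Z^{\nu,\eps}\|_{L^2(\mathcal A)}+\|\Lambda^{\nu,\eps}\|_{L^2}$, and Lemma~\ref{Lm:boundary-corrector} with $k=0$ gives $\|\Lambda^{\nu,\eps}\|_{L^2}\le K\eps^{\alpha/2}$. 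Adding the two bounds yields \eqref{Est:poin}.
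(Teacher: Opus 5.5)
Your proof is correct, and it matches the paper's except at the step $\|\nabla Z^{\nu,\eps}\|\lesssim\|D(Z^{\nu,\eps})\|$.

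\textbf{The paper's route.} It notes that $Z^{\nu,\eps}=v^{\nu,\eps}-\Lambda^{\nu,\eps}$ already lies in $H^1_0(\mathcal F_{\nu,\eps}(t))$. This is because $v^{\nu,\eps}$ is given on the whole exterior domain and $\Lambda^{\nu,\eps}$ has compact support, so the outer sphere never enters and no cutoff is needed. It then integrates by parts twice on all of $\mathcal F_{\nu,\eps}(t)$. For every $w\in H^1_0$ one has $2\|D(w)\|_{L^2}^2=\|\nabla w\|_{L^2}^2+\|\div w\|_{L^2}^2$. Hence $\|\nabla w\|_{L^2}\le\sqrt2\,\|D(w)\|_{L^2}$, with no divergence-free assumption.

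\textbf{The obstacle you flagged.} It is therefore not present. You recalled the equality case, which holds for solenoidal fields, but the inequality holds for every $H^1_0$ field.

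\textbf{Your route.} You use a Korn inequality on the fixed annulus $\mathcal A_1$ with zero trace only on $\partial\Omega$. It comes from Korn's second inequality, a compactness argument, and the fact that a rigid motion vanishing on $\partial\Omega$ is zero. This is also valid and scale invariant. When you transport it by $x=Qy+h$, rotate the field as well, $\tilde w(y)=Q^\top w(Qy+h)$, so that $D(\tilde w)=Q^\top D(w)Q$. Merely composing with the change of variables does not preserve the symmetric gradient.

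\textbf{Comparison.} Your version gives a localized bound with $\|D(v^{\nu,\eps})\|_{L^2(\mathcal A_{\nu,\eps}(t))}$ on the right, which would matter if $v^{\nu,\eps}$ were only controlled near the body. The cost is a non-explicit constant and a compactness argument, where the paper needs one line with constant $\sqrt2$. The remaining steps are exactly as in the paper: the Poincar\'e inequality of Lemma~\ref{Lm:poincare}, and the $L^2$ bounds of Lemma~\ref{Lm:boundary-corrector} for $k=0,1$.
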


\begin{proof}
	Since \(v^{\nu,\eps}-\Lambda^{\nu,\eps}\in H_0^1(\mathcal{F}_{\nu,\eps}(t))\), Lemma \ref{Lm:poincare} yields
	\begin{equation}
		\begin{split}
			\|v^{\nu,\eps}-\Lambda^{\nu,\eps}\|_{L^2(\mathcal A_{\nu,\eps}(t))}
			&\le K\eps\|\nabla(v^{\nu,\eps}-\Lambda^{\nu,\eps})\|_{L^2(\mathcal A_{\nu,\eps}(t))}\\
			&\le K\eps\|D(v^{\nu,\eps}-\Lambda^{\nu,\eps})\|_{L^2(\mathcal{F}_{\nu,\eps}(t))},
		\end{split}
	\end{equation}
	where in the second inequality we used integration by parts. Combining this with Lemma \ref{Lm:boundary-corrector} immediately gives \eqref{Est:poin}.
\end{proof}

With this proposition at hand, we proceed to verify the bounds for \(I_i\), \(i=1,\dots,4\), in \eqref{Est:summary}.
\paragraph{\textbf{Part 1. The viscous term $I_1$}} 
From the Cauchy--Schwarz and Young inequalities, we have
\begin{equation*}
	I_1 \leqslant \nu\int_0^t\|D(W^{\nu,\eps})\|_{L^2}^2\dif s+ K\nu\int_0^t\|D(\widebar u^{\nu,\eps})\|_{L^2}^2\dif s.
\end{equation*}
Applying Lemma \ref{Lm:boundary-1} then yields
\begin{equation}
	I_1 \leqslant \nu\int_0^t\|D(W^{\nu,\eps})\|_{L^2}^2\dif s + K\nu.
\end{equation}

\paragraph{\textbf{Part 2. The nonlinear term $I_2$}}

We begin by splitting \(I_2\) into two parts:
\begin{equation}\label{Equ:I_2}
	\begin{split}
		I_2 
		&= -\int_0^t\int_{\mathcal{F}_{\nu,\eps}(s)} \bigl(W^{\nu,\eps}\cdot \nabla \widebar u^{\nu,\eps}\bigr)\cdot W^{\nu,\eps}\,\dif x\dif s\\
		&\quad - \int_0^t\int_{\mathcal{F}_{\nu,\eps}(s)} \bigl(\widebar u^{\nu,\eps}\cdot \nabla \widebar u^{\nu,\eps} - \varphi^{\nu,\eps}\widebar u\cdot \nabla \widebar u\bigr)\cdot W^{\nu,\eps}\,\dif x\dif s.
	\end{split}
\end{equation}

To estimate the first term on the right-hand side of \eqref{Equ:I_2}, we decompose \(\nabla \widebar u^{\nu,\eps}\) as
\begin{equation*}
	\begin{split}
		\nabla \widebar u^{\nu,\eps}
		&= \bigl(\nabla^2\varphi^{\nu,\eps}\times\widebar\Psi^{\nu,\eps}
		+\nabla\varphi^{\nu,\eps}\times\widebar u
		+\nabla\varphi^{\nu,\eps}\times\nabla\widebar\Psi^{\nu,\eps}\bigr)
		+\varphi^{\nu,\eps}\nabla\widebar u\\
		&=: g^{\nu,\eps}+\varphi^{\nu,\eps}\nabla\widebar u.
	\end{split}
\end{equation*}
Since \(\nabla\varphi^{\nu,\eps}\) is supported in \(\overline{\mathcal A_{\nu,\eps}(t)}\), the above decomposition yields
\begin{equation*}
	\begin{split}
		&-\int_0^t\int_{\mathcal{F}_{\nu,\eps}(s)} \bigl(W^{\nu,\eps}\cdot \nabla \widebar u^{\nu,\eps}\bigr)\cdot W^{\nu,\eps}\,\dif x\dif s \\
		&\quad\le \int_0^t \|W^{\nu,\eps}\|_{L^2}^2 \|\varphi^{\nu,\eps}\nabla\widebar u\|_{L^\infty}\,\dif s
		+ \int_0^t \|W^{\nu,\eps}\|_{L^2(\mathcal A_{\nu,\eps}(s))}^2 \|g^{\nu,\eps}\|_{L^\infty}\,\dif s.
	\end{split}
\end{equation*}
Applying Lemma \ref{Lm:boundary-1} and Proposition \ref{Pro:w} yields
\begin{equation}\label{Est:first}
	\begin{split}
		&-\int_0^t\int_{\mathcal{F}_{\nu,\eps}(s)} \bigl(W^{\nu,\eps}\cdot \nabla \widebar u^{\nu,\eps}\bigr)\cdot W^{\nu,\eps}\,\dif x\dif s\\
		&\quad\le K\Bigl(\eps\int_0^t\|D(W^{\nu,\eps})\|_{L^2}^2\,\dif s
		+\int_0^t\|W^{\nu,\eps}\|_{L^2}^2\,\dif s
		+\eps^{\alpha-1}\Bigr).
	\end{split}
\end{equation}

We now turn to the second term in \eqref{Equ:I_2}. Splitting the integrand gives
\begin{equation*}
	\begin{split}
		&-\int_0^t\int_{\mathcal{F}_{\nu,\eps}(s)} \bigl(\widebar u^{\nu,\eps}\cdot \nabla \widebar u^{\nu,\eps} - \varphi^{\nu,\eps}\widebar u\cdot \nabla \widebar u\bigr)\cdot W^{\nu,\eps}\,\dif x\dif s\\
		&= -\int_0^t\int_{\mathcal{F}_{\nu,\eps}(s)} \bigl(\widebar u^{\nu,\eps}\cdot \nabla(\widebar u^{\nu,\eps}-\widebar u)\bigr)\cdot W^{\nu,\eps}\,\dif x\dif s\\
		&\quad -\int_0^t\int_{\mathcal{F}_{\nu,\eps}(s)} \bigl((\widebar u^{\nu,\eps}-\varphi^{\nu,\eps}\widebar u)\cdot \nabla\widebar u\bigr)\cdot W^{\nu,\eps}\,\dif x\dif s.
	\end{split}
\end{equation*}
Then, by Lemma \ref{Lm:boundary-1}, we obtain
\begin{equation}\label{Est:third}
	\begin{split}
		&-\int_0^t\int_{\mathcal{F}_{\nu,\eps}(s)} \bigl(\widebar u^{\nu,\eps}\cdot \nabla \widebar u^{\nu,\eps} - \varphi^{\nu,\eps}\widebar u\cdot \nabla \widebar u\bigr)\cdot W^{\nu,\eps}\,\dif x\dif s \\
		&\le K\eps^{1/2}\int_0^t\|W^{\nu,\eps}\|_{L^2}\,\dif s.
	\end{split}
\end{equation}

Finally, substituting \eqref{Est:first} and \eqref{Est:third} into \eqref{Equ:I_2} and applying Young's inequality yields
\begin{equation}
	I_2 \le K\Bigl(\eps\int_0^t\|D(W^{\nu,\eps})\|_{L^2}^2\,\dif s
	+\int_0^t\|W^{\nu,\eps}\|_{L^2}^2\,\dif s
	+\eps^{\min(1,\alpha-1)}\Bigr).
\end{equation}

\paragraph{\textbf{Part 3. The remainders $I_3$ and $I_4$}} 
We first integrate by parts to obtain
\begin{equation}
	I_3 = \int_0^t\int_{\mathcal{F}_{\nu,\eps}(s)} \bigl(\widebar p(x,s)-\widebar p(h^{\nu,\eps}(s),s)\bigr)\nabla\varphi^{\nu,\eps}\cdot W^{\nu,\eps}\,\dif x\dif s.
\end{equation}
Invoking Lemma \ref{Lm:boundary-1} then gives
\begin{equation}
	I_3 \le K\eps^{3/2}\int_0^t\|W^{\nu,\eps}\|_{L^2}\,\dif s
	\le K\Bigl(\int_0^t\|W^{\nu,\eps}\|_{L^2}^2\,\dif s+\eps^3\Bigr).
\end{equation}

It remains to estimate \(I_4\). The expression
\[
\partial_t\varphi^{\nu,\eps}\widebar u+\partial_t\nabla\varphi^{\nu,\eps}\times\widebar\Psi^{\nu,\eps}
+\nabla\varphi^{\nu,\eps}\times\partial_t\widebar\Psi^{\nu,\eps}
\]
is supported in \(\overline{\mathcal A_{\nu,\eps}(t)}\). Hence, by Lemma \ref{Lm:boundary-1} and Proposition \ref{Pro:w},
\begin{equation}
	I_4 \le K\Bigl(\eps^{\alpha/2}\|D(W^{\nu,\eps})\|_{L^2}+\eps^{\alpha-1}\Bigr).
\end{equation}
Applying Young's inequality then yields the bound stated in \eqref{Est:summary}.

\section{Conclusions}
Note that Theorem \ref{Thm:main} imposes a convergence assumption on the initial data. In this section, we analyze this assumption and draw relevant conclusions.

\begin{proposition}\label{Pr:app}
	Let $\widebar u_0\in H^3(\mathbb R^3)$ be divergence-free. Then there exists a family of approximations $u_0^{\nu,\eps}\in L^2(\mathbb R^3)$ satisfying \eqref{Initial-condition} and the estimate
	\begin{equation}\label{Est:initial}
		\|u_0^{\nu,\eps}-\widebar u_0\|_{L^2(\mathbb R^3)}\le K\eps.
	\end{equation}
\end{proposition}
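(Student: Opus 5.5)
We construct $u_0^{\nu,\eps}$ by the same stream-function cutoff used for $\widebar u^{\nu,\eps}$, now at $t=0$ with $h^{\nu,\eps}(0)=0$, and then add a rigid-motion corrector near the body. The approximation is made in stream-function form because this keeps it divergence-free while the boundary condition is enforced by construction.

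Let $\widebar\Psi_0$ be a stream function of $\widebar u_0$, i.e. $\curl\widebar\Psi_0=\widebar u_0$. For instance, take $\widebar\Psi_0=\curl(-\Delta)^{-1}\widebar u_0$ (Biot--Savart). Since $\widebar u_0\in H^3$, this $\widebar\Psi_0$ is locally bounded with bounded gradient. Set $\widebar\Psi_0^\eps(x)=\widebar\Psi_0(x)-\widebar\Psi_0(0)$ and $\varphi^\eps(x)=\varphi(x/\eps)$, with $\varphi$ the cutoff of Section~2. Let $\Phi_0(x)=-\tfrac12\ell_0\times x+\tfrac12|x|^2\omega_0$ and $v_0(x)=\ell_0+\omega_0\times x$. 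A direct computation, the same one that gives solenoidality of $\Lambda^{\nu,\eps}$ in Lemma~\ref{Lm:boundary-corrector}, shows $\curl\Phi_0=v_0$. Define
\[
u_0^{\nu,\eps}=\curl\bigl(\varphi^\eps\widebar\Psi_0^\eps+(1-\varphi^\eps)\Phi_0\bigr).
\]
This field is divergence-free on all of $\mathbb R^3$, belongs to $L^2$, and equals $\curl\Phi_0=v_0$ on $\{|x|\le(R+1)\eps\}\supset\overline{\Omega_\eps}$. Hence it matches the required extension $\ell_0+\omega_0\times x$ inside $\Omega_\eps$ and satisfies both conditions of \eqref{Initial-condition}, the normal trace condition in fact holding pointwise.

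It remains to estimate the error. We have
\[
u_0^{\nu,\eps}-\widebar u_0=(\varphi^\eps-1)\widebar u_0+\nabla\varphi^\eps\times\widebar\Psi_0^\eps+(1-\varphi^\eps)v_0-\nabla\varphi^\eps\times\Phi_0 ,
\]
and every term is supported in $B(0,(R+2)\eps)$, a set of measure $O(\eps^3)$. On this ball, $\|\widebar u_0\|_{L^\infty}\le K$ by Sobolev embedding, and $|\widebar\Psi_0^\eps|\le K\eps$ by the mean value theorem. Moreover $|v_0|\le K$ and $|\Phi_0|\le K\eps$, since $\ell_0$ and $\omega_0$ are fixed. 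Using also $|\nabla\varphi^\eps|\le K\eps^{-1}$, all four terms are $O(1)$ in $L^\infty$. Therefore
\[
\|u_0^{\nu,\eps}-\widebar u_0\|_{L^2}\le K\eps^{3/2}\le K\eps,
\]
which gives \eqref{Est:initial}.

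The only delicate point is the choice of $\widebar\Psi_0$. We need it locally bounded with $\nabla\widebar\Psi_0\in L^\infty$ near the origin, so that the subtraction of $\widebar\Psi_0(0)$ produces the factor $\eps$. The Biot--Savart choice gives $\nabla\widebar\Psi_0$ as a Calderón--Zygmund operator applied to $\widebar u_0$, so $\nabla\widebar\Psi_0\in H^3\subset L^\infty$. Subtracting the constant $\widebar\Psi_0(0)$ does not change the curl, so the construction is unaffected.
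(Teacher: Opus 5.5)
Your construction is, up to one sign, exactly the paper's: a stream function normalized to vanish at the origin, cut off by $\varphi^\eps$, plus the rigid-motion corrector built from $\Phi_0$. Your error estimate is also the paper's support-and-sup-norm argument, giving $K\eps^{3/2}$. The sign, however, is wrong in a way that breaks the boundary condition, because the identity $\curl\Phi_0=v_0$ is false. For constant vectors $a,\omega_0$ one has $\curl(a\times x)=2a$ and $\curl(\tfrac12|x|^2\omega_0)=x\times\omega_0$, so
\[
\curl\Phi_0=-\ell_0-\omega_0\times x=-v_0 .
\]
This is also what the solenoidality of $\Lambda^{\nu,\eps}$ in Lemma~\ref{Lm:boundary-corrector} actually uses, since $\div\Lambda^{\nu,\eps}=-\nabla\varphi^{\nu,\eps}\cdot\bigl(v^{\nu,\eps}+\curl\Phi^{\nu,\eps}\bigr)$. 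Consequently your field equals $-(\ell_0+\omega_0\times x)$ on $B(0,(R+1)\eps)\supset\overline{\Omega_\eps}$. It is therefore not the prescribed rigid extension inside the body. The normal-trace condition in \eqref{Initial-condition} also fails unless $(\ell_0+\omega_0\times x)\cdot n^\eps$ vanishes identically on $\partial\Omega_\eps$.

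The repair is immediate: put $-(1-\varphi^\eps)\Phi_0$ in the potential. Then $\curl\bigl(-(1-\varphi^\eps)\Phi_0\bigr)=(1-\varphi^\eps)v_0+\nabla\varphi^\eps\times\Phi_0=\Lambda^\eps$, and your $u_0^{\nu,\eps}$ becomes literally the paper's $\curl(\varphi^\eps\Psi)+\Lambda^\eps$. Your $L^2$ bound goes through unchanged, because it only used $|v_0|\le K$, $|\Phi_0|\le K\eps$ and $|\nabla\varphi^\eps|\le K\eps^{-1}$ on a set of measure $O(\eps^3)$. Your explicit Biot--Savart choice of $\widebar\Psi_0$, with $\nabla\widebar\Psi_0\in H^3\subset L^\infty$, is a welcome detail. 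It justifies the bound $|\widebar\Psi_0-\widebar\Psi_0(0)|\le K\eps$ near the origin, which the paper uses without saying which stream function it takes.
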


\begin{proof}
	Let $\Psi$ be a stream function for $\widebar u_0$, normalized so that $\Psi(0)=0$. Let $\varphi\in C^\infty(\mathbb R^3)$ be a cut-off function such that $\varphi\equiv 0$ for $0\le |x|\le R+1$, $\varphi\ge 0$, and $\varphi\equiv 1$ for $|x|\ge R+2$. Set $\varphi^\eps(x):=\varphi(x/\eps)$. For $x\in\mathbb R^3$, define
	\begin{equation}
		\Lambda^\eps(x)=(1-\varphi^\eps)(\ell_0+\omega_0\times x)+\nabla\varphi^\eps\times \Phi_0,
	\end{equation}
	where
	\begin{equation}
		\Phi_0(x)=-\frac12\,\ell_0\times x+\frac12|x|^2\omega_0.
	\end{equation}
	The approximating initial velocity is then given by
	\begin{equation}
		u_0^{\nu,\eps}(x)=\curl(\varphi^\eps\Psi)(x)+\Lambda^\eps(x),\qquad x\in\mathbb R^3.
	\end{equation}
	
	One readily verifies that $u_0^{\nu,\eps}$ satisfies \eqref{Initial-condition}. It remains to prove \eqref{Est:initial}. Indeed,
	\begin{equation}
		\|u_0^{\nu,\eps}-\widebar u_0\|_{L^2(\mathbb R^3)}
		\le \|(1-\varphi^\eps)\widebar u_0\|_{L^2(\mathbb R^3)}
		+\|\nabla\varphi^\eps\times\Psi+\Lambda^\eps\|_{L^2(\mathbb R^3)}.
	\end{equation}
	Since $1-\varphi^\eps$ is supported in $\overline{B(0,(R+2)\eps)}$ and $\widebar u_0$ is bounded, we have
	\begin{equation}\label{Est:initial-1}
		\|(1-\varphi^\eps)\widebar u_0\|_{L^2(\mathbb R^3)}\le K\eps^{3/2}.
	\end{equation}
	On the other hand, $\nabla\varphi^\eps$ is supported in the annulus $(R+1)\eps\le |x|\le (R+2)\eps$, where $|\nabla\varphi^\eps\times \Psi + \Lambda^\eps|\le K$. Hence
	\begin{equation}\label{Est:initial-2}
		\|\nabla\varphi^\eps\times\Psi+\Lambda^\eps\|_{L^2(\mathbb R^3)}\le K\eps^{3/2}.
	\end{equation}
	Combining \eqref{Est:initial-1} and \eqref{Est:initial-2} yields \eqref{Est:initial}.
\end{proof}

With $u_0^{\nu,\eps}$ constructed as above, we may combine \eqref{Est:tmp-100}-\eqref{Est:aa} and \eqref{Equ:diff-t-new-new} to obtain the following result.

\begin{corollary}\label{Cor:cor1}
	Let $\alpha$ and $\beta$ be as in Theorem \ref{Thm:main}. Then there exist constants $k=k(\Omega,\widebar u_0,T)>0$ and $K=K(\Omega,\widebar u_0,T,\rho_s)$ such that
	\begin{equation}
		\|u^{\nu,\eps}-\widebar u\|_{L^2(\mathbb R^3)}+\sqrt{E_{\mathrm{body}}^{\nu,\eps}(t)}
		\le K\Bigl(\eps^{\min(\alpha-1,3-\beta)/2}+\sqrt{\nu}\Bigr)
	\end{equation}
	for all $0<\eps\le k\nu$ and $0\le t\le T$.
\end{corollary}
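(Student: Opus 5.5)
The corollary follows by specializing Theorem \ref{Thm:main} to the initial data constructed in Proposition \ref{Pr:app}. Equivalently, it follows by rerunning the final assembly of the proof of Theorem \ref{Thm:main}, namely \eqref{Est:tmp-100}--\eqref{Est:aa} together with \eqref{Equ:diff-t-new-new}.

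\textbf{Step 1: the initial data are admissible.} Take $u_0^{\nu,\eps}$ as in Proposition \ref{Pr:app}. Then $u_0^{\nu,\eps}\in L^2(\mathbb R^3)$ and it satisfies \eqref{Initial-condition}. Moreover, \eqref{Est:initial} gives $\|u_0^{\nu,\eps}-\widebar u_0\|_{L^2}\le K\eps\to0$, so hypothesis \eqref{Assumption-initial} holds and Theorem \ref{Thm:main} applies.

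I would also check that $u_0^{\nu,\eps}$ equals $\ell_0+\omega_0\times x$ on $\Omega_\eps$, so that it is a genuine extended initial velocity. Since $\varphi^\eps\equiv0$ on $B(0,(R+1)\eps)\supset\Omega_\eps$, we get $\curl(\varphi^\eps\Psi)=0$ and $\Lambda^\eps=\ell_0+\omega_0\times x$ there. Thus the rigid-body extension convention is respected.

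\textbf{Step 2: apply the estimate.} For $0<\eps\le k\nu$, \eqref{Est:diff} gives
\[
\|u^{\nu,\eps}-\widebar u\|_{L^2}+\sqrt{E_{\mathrm{body}}^{\nu,\eps}(t)}\le K\bigl(K\eps+\sqrt\nu+\eps^{\min(3-\beta,\alpha-1)/2}\bigr).
\]

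\textbf{Step 3: absorb the $\eps$ term.} It remains to absorb $K\eps$ into $\eps^{\min(3-\beta,\alpha-1)/2}$. Since $1<\alpha\le\beta<3$, the exponent $\gamma:=\min(3-\beta,\alpha-1)/2$ lies in $(0,1)$. Because $0<\eps<1$, this gives $\eps\le\eps^{\gamma}$. Enlarging $K$ then yields the stated bound, and $K$ depends only on $\Omega,\widebar u_0,T,\rho_s$.

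The bound $\eps\le \eps^{\gamma}$ uses only $\gamma<1$, which here follows from $\alpha<3$ or $3-\beta<2$. The constant $k$ is the one from Theorem \ref{Thm:main}, so it is unchanged.

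\textbf{Main obstacle.} The only delicate point is whether the constant in \eqref{Est:initial} depends on $\ell_0,\omega_0$. It does, and those quantities are fixed, so they can be absorbed into $K$. The corollary is otherwise a direct consequence of the two results.
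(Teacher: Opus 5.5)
Your proposal is correct and matches the paper's argument: the paper likewise takes the initial data of Proposition \ref{Pr:app} and combines \eqref{Est:tmp-100}--\eqref{Est:aa} with \eqref{Equ:diff-t-new-new}, and it absorbs the resulting $K\eps$ term into $\eps^{\min(\alpha-1,3-\beta)/2}$ because that exponent is below $1$, as you do. Your extra checks (the rigid-motion extension on $\Omega_\eps$, the admissibility of the data, and the implicit dependence of $K$ on the fixed $\ell_0,\omega_0$) are details the paper leaves unstated.
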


Furthermore, when the initial linear velocity of the rigid body vanishes, Theorem \ref{Thm:endpoint} yields the following improvement.

\begin{corollary}\label{Cor:cor2}
	Suppose that $\ell_0=0$ and $\rho_s^{\nu,\eps}=\rho_s\eps^{-3}$ for some $\rho_s>0$. Then there exist constants $k=k(\Omega,\widebar u_0,T)>0$ and $K=K(\Omega,\widebar u_0,T,\rho_s)$ such that
	\begin{equation}
		\|u^{\nu,\eps}-\widebar u\|_{L^2(\mathbb R^3)}+\sqrt{E_{\mathrm{body}}^{\nu,\eps}(t)}
		\le K\sqrt{\nu}
	\end{equation}
	for all $0<\eps\le k\nu$ and $0\le t\le T$.
\end{corollary}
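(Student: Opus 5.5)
Corollary~\ref{Cor:cor2} follows from Theorem~\ref{Thm:endpoint} once its initial-data term is controlled. We take the family $u_0^{\nu,\eps}$ built in Proposition~\ref{Pr:app} with $\ell_0=0$. First we check that this family is admissible in the critical case. The construction in Proposition~\ref{Pr:app} does not depend on the density. It yields $u_0^{\nu,\eps}\in L^2(\mathbb R^3)$ satisfying \eqref{Initial-condition} and equal to $\omega_0\times x$ on $\Omega_\eps$, since there $\varphi^\eps\equiv0$ and $\nabla\varphi^\eps\equiv0$. It also gives the bound $\|u_0^{\nu,\eps}-\widebar u_0\|_{L^2}\le K\eps$.

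Next we invoke Theorem~\ref{Thm:endpoint}, whose hypotheses ($\ell_0=0$, $\rho_s^{\nu,\eps}=\rho_s\eps^{-3}$) are exactly those of the corollary. For $0<\eps< k\nu$ and $0\le t\le T$ it gives
\begin{equation*}
\|u^{\nu,\eps}(\cdot,t)-\widebar u(\cdot,t)\|_{L^2}+\sqrt{E_{\mathrm{body}}^{\nu,\eps}(t)}\le K\bigl(\|u_0^{\nu,\eps}-\widebar u_0\|_{L^2}+\sqrt\nu\bigr)\le K(\eps+\sqrt\nu).
\end{equation*}
The endpoint $\eps=k\nu$ is handled in one of two ways. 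We may shrink $k$ slightly, which is harmless since $k$ only needs to satisfy $k<K_1^{-1}$ in the Gr\"onwall step. Alternatively, we observe that the proof leading to \eqref{Equ:diff-t-new-new-new} works verbatim for $\eps\le k\nu$.

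Finally we absorb the $\eps$ term. Since $\eps\le k\nu$ and $\nu<1$, we have $\eps\le k\nu\le k\sqrt\nu$. Hence $K(\eps+\sqrt\nu)\le K(1+k)\sqrt\nu$, and renaming the constant gives the claim. The constant still depends only on $\Omega,\widebar u_0,T,\rho_s$, because $k$ depends only on $\Omega,\widebar u_0,T$.

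There is no real obstacle here. The only point needing care is to confirm that Proposition~\ref{Pr:app}'s family, with $\ell_0=0$, falls within the scope of Theorem~\ref{Thm:endpoint}. In particular, the rigid-body initial energy $E_{\mathrm{body}}^{\nu,\eps}(0)=\tfrac12 J_0^{\nu,\eps}\omega_0\cdot\omega_0=O(\eps^2)$ is already accounted for inside the proof of Theorem~\ref{Thm:endpoint}.
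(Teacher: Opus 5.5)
Your proposal is correct and follows the same route as the paper. You take the initial data of Proposition~\ref{Pr:app} with $\ell_0=0$, apply Theorem~\ref{Thm:endpoint}, and absorb the $O(\eps)$ initial-data error into $\sqrt\nu$ via $\eps\le k\nu\le k\sqrt\nu$ for $\nu<1$; your handling of $\eps<k\nu$ versus $\eps\le k\nu$ by shrinking $k$ is a harmless detail the paper leaves implicit.
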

\begin{remark}
	Corollary \ref{Cor:cor1} and \ref{Cor:cor2} show that smallness of the initial kinetic energy of the rigid body is preserved for all subsequent times. This is consistent with the physical intuition that, as both the fluid viscosity and the body size tend to zero, the influence of the fluid on the rigid body motion becomes negligible.
\end{remark}

\section{Acknowledgments}
The work of Xiaoguang You was partially supported by the National Natural Science Foundation of China (Grant No.~12561038) and the Jiangxi Provincial Natural Science Foundation (Grant No.~20242BAB25008).

\section*{Data sharing}
Data sharing is not applicable to this article as no datasets were generated or analyzed during the current study.

\section*{Conflict of interest}
The authors declare that they have no financial or non-financial interests that are directly or indirectly related to the work submitted for publication.

\bibliographystyle{elsarticle-harv} 
\bibliography{my}
\end{document}